\newtheorem{theorem}{Theorem}[section]
\newtheorem{lemma}[theorem]{Lemma}
\newtheorem{proposition}[theorem]{Proposition}
\newtheorem{corollary}[theorem]{Corollary}
\newtheorem{claim}[theorem]{Claim}
\theoremstyle{definition}
\newtheorem{definition}[theorem]{Definition}
\newtheorem{example}[theorem]{Example}
\theoremstyle{remark}
\newtheorem{remark}[theorem]{Remark}
\newtheorem*{thm_R-group_characterization}{Theorem~\ref{R-group_characterization}}
\begin{document}

\title[Generalized torsion elements of order two in $3$--manifold groups]{Classification of generalized torsion elements of order two in $3$--manifold groups}

\author{Keisuke Himeno}
\address{Graduate School of Advanced Science and Engineering, Hiroshima University,
1-3-1 Kagamiyama, Higashi-hiroshima, 7398526, Japan}
\email{himeno-keisuke@hiroshima-u.ac.jp}

\author{Kimihiko Motegi}
\address{Department of Mathematics, Nihon University,
3-25-40 Sakurajosui, Setagaya-ku, Tokyo, 1568550, Japan}
\email{motegi.kimihiko@nihon-u.ac.jp}

\author{Masakazu Teragaito}
\address{Department of Mathematics Education, Hiroshima University, 
1-1-1 Kagamiyama, Higashi-Hiroshima, 7398524, Japan}
\email{teragai@hiroshima-u.ac.jp}

\begin{abstract}
%% Text of abstract
Let $G$ be a group and $g$ a non-trivial element in $G$. 
If some non-empty finite product of conjugates of $g$ equals to the identity,
then $g$ is called a generalized torsion element. 
The minimum number of conjugates in such a product is called the order of $g$. 
We will classify $3$--manifolds $M$, each of whose fundamental group has a generalized torsion element of order two. 
Furthermore, we will classify such elements in $\pi_1(M)$. 

We also prove that $R$--group and $\overline{R}$--group coincide for $3$--manifold groups, 
and classify $3$--manifold groups which are $R$--groups (and hence $\overline{R}$--groups).  
\end{abstract}

\maketitle
%%Graphical abstract
%\begin{graphicalabstract}
%\includegraphics{grabs}
%\end{graphicalabstract}

%%Research highlights
%\begin{highlights}
%\item Research highlight 1
%\item Research highlight 2
%\end{highlights}

%\begin{keyword}
%% keywords here, in the form: keyword \sep keyword

%% PACS codes here, in the form: \PACS code \sep code

%% MSC codes here, in the form: \MSC code \sep code
%% or \MSC[2008] code \sep code (2000 is the default)

%$3$--manifold group \sep  generalized torsion element \sep unique root property \sep $R$--group \sep $\overline{R}$--group

%\subjclass[2020]{Primary 57M05; Secondary 57K10, 57M07, 20F19, 20F38, 20F60, 20F65, 06F15}
%\end{keyword}

%% \linenumbers

%% main text
\section{Introduction}\label{sec:intro}

It is well known that the fundamental group of any aspherical $3$--manifold, which is a $K(\pi, 1)$ space, does not admit a torsion element. 
On the other hand, the fundamental group of such a manifold may have a generalized torsion element defined below. 

\begin{definition}
\label{g-torsion}
A non-trivial element $g$ in a group $G$ is a \emph{generalized torsion element} 
if some non-empty finite product of its conjugates is the identity, i.e. 
\[
 (x_1^{-1} g x_1)(x_2^{-1} g x_2) \cdots(x_{k}^{-1}g x_{k})=1\quad \textrm{for some}\  x_{1},\ldots,x_k \in G.
\]
As a natural generalization of the order of torsion element, the \emph{order} of a generalized torsion element $g$ is defined as 
the minimum number of conjugates yielding the identity. 
\end{definition}

If $x_1 = \cdots = x_k = 1$, then $g$ is a usual torsion element. 
Since a generalized torsion element is not the identity, its order is at least two. 
It is known that for a given integer $n > 1$,  
there are infinitely many hyperbolic $3$--manifolds $M_n$, each of whose fundamental group has rank $n$ and contains a generalized torsion element. 
Furthermore, we may choose $M_n$ so that the order of the generalized torsion element is arbitrarily large; see \cite{IMT_BLMS2023}.

In what follows, for notational simplicity, we will write $g^x=x^{-1} g x$ for $g, x \in G$. 
Also, we assume that $3$--manifolds are connected.

A generalized torsion element naturally appears in the fundamental group of the Klein bottle.
It has a presentation $\langle a,b \mid a^{-1}bab = 1 \rangle$.
The relation shows $b^a b=1$, so the generator $b$ is a generalized torsion element of order two.

Recall that every compact orientable $3$--manifold $M$ can be uniquely expressed as a connected sum 
$M = M_1 \# M_2\# \cdots \# M_n$ of $3$--manifolds $M_i$ which are prime in the sense that they can be 
expressed as connected sums only in the trivial way $M_i = M_i\# S^3$ \cite{Kne1929, Milnor1962}. 
Note that an orientable, prime $3$--manifold is irreducible, except for $S^2 \times S^1$. 

Assume that each $\pi_1(M_i)$ is torsion-free. 
Then as shown in \cite[Theorem~1.3 (2)]{IMT_PAMS2019} any generalized torsion element in $\pi_1(M)$ is conjugate into a generalized torsion element of the same order
in $\pi_1(M_i)$ for some $i$. 

As we remarked in \cite{IMT_PAMS2019},
if some $\pi_1(M_i)$ has torsion, then this is not the case. 
See Remark~1.4 in \cite{IMT_PAMS2019}, 
which shows that there can exist a generalized torsion element $g$ of $\pi_1(M)$ not conjugate into any factor group $\pi_1(M_i)$. 
On the other hand, obviously we have a torsion element (by the assumption), but its order is not necessarily the same as the order of $g$.  
However, if we restrict our attention to generalized torsion elements of order two,
then even when a generalized torsion element $g$ of order two is not conjugate into some factor group, 
we may find a torsion element of order two. 

\begin{theorem}
\label{thm:local}
Let $M=M_1 \# M_2\# \cdots \# M_n$ be the prime decomposition of a compact orientable $3$--manifold $M$ with $n\ge 2$.
Then $\pi_1(M)$ contains a generalized torsion element of order two if and only if so does $\pi_1(M_i)$ for some $i$.
\end{theorem}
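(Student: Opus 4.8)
The plan is to reduce the statement to a purely group-theoretic fact about free products and then analyze it through the normal-form theory of free products. By the Seifert--van Kampen theorem the prime decomposition gives $\pi_1(M)\cong \pi_1(M_1)*\cdots*\pi_1(M_n)$; the only prime factor that is not irreducible is $S^2\times S^1$, whose group $\mathbb{Z}$ is torsion-free abelian and hence contains no generalized torsion element, so such factors may be ignored. Thus it suffices to prove that in a free product $G=G_1*\cdots*G_n$ there is a generalized torsion element of order two if and only if some $G_i$ has one. The first observation I would record is a reformulation of ``order two'': for a nontrivial $g$, writing the defining relation $g^{x_1}g^{x_2}=1$ and conjugating by $x_1^{-1}$, one sees that $g$ is a generalized torsion element of order two if and only if $g$ is conjugate to $g^{-1}$. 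Indeed $g^{y}=g^{-1}$ gives $g\cdot g^{y}=1$, a product of two conjugates, and no single conjugate of a nontrivial element is trivial.

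The ``if'' direction is then immediate: if $g\in G_i$ satisfies $g^{y}=g^{-1}$ for some $y\in G_i$, the same relation holds in $G$, and since $g\neq 1$ in $G$ its order there is again exactly two. For the ``only if'' direction I would invoke the classical conjugacy theorem for free products: every element is conjugate to a cyclically reduced element; a cyclically reduced element of syllable length $\geq 2$ is never conjugate to one of length $\leq 1$; two nontrivial elements of factors are conjugate in $G$ only if they lie in the same factor and are conjugate there; and two cyclically reduced elements of length $\geq 2$ are conjugate exactly when one is a cyclic permutation of the other. Let $g$ be a generalized torsion element of order two, so $g$ is conjugate to $g^{-1}$. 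If $g$ is conjugate into a factor, say to $a\in G_i$ with $a\neq 1$, then $a$ is conjugate to $a^{-1}$ in $G$, hence in $G_i$ by the conjugacy theorem, so $a$ is a generalized torsion element of order two in $G_i$ and we are done.

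The remaining, and main, case is when $g$ is conjugate to a cyclically reduced word $w=a_1a_2\cdots a_m$ of length $m\geq 2$. Then $w$ conjugate to $w^{-1}=a_m^{-1}\cdots a_1^{-1}$ forces $w^{-1}$ to be a cyclic permutation of $w$, which upon matching syllables pairs the index $i$ with $c-i$ (indices modulo $m$) for a fixed constant $c$, with $a_{c-i}=a_i^{-1}$; note paired syllables automatically lie in the same free factor. The crux is to show that this reflection $\sigma\colon i\mapsto c-i$ of $\mathbb{Z}/m$ must fix some index $i_0$, for then $a_{i_0}=a_{i_0}^{-1}$ gives $a_{i_0}^2=1$, a genuine order-two torsion element (hence a generalized torsion element of order two) in its factor. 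A fixed point is a solution of $2i\equiv c$, while an adjacent swap $\sigma(i)=i+1$ is a solution of $2i\equiv c-1$; since modulo an even $m$ exactly one of $c,c-1$ is even, the absence of a fixed point would force some adjacent pair to satisfy $a_{i+1}=a_i^{-1}$, contradicting the cyclic reducedness of $w$, which requires adjacent syllables to lie in different factors. When $m$ is odd a fixed point always exists. The main obstacle is precisely this combinatorial step---ruling out fixed-point-free reflections using reducedness---since it is what prevents a generalized torsion element of length $\geq 2$ from arising without any factor containing one.
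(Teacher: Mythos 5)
Your proof is correct, but it takes a genuinely different route from the paper's. Both arguments split into the same two cases---$g$ conjugate into a prime factor, or not---and in the second case both end up producing an honest involution in some $\pi_1(M_i)$; the difference is the tool used to find it. You rely on the classical conjugacy theorem for free products \cite{MKS2004}: a cyclically reduced word $w=a_1\cdots a_m$ of syllable length $m\ge 2$ that is conjugate to $w^{-1}$ must have $w^{-1}$ among its cyclic permutations, giving a reflection $i\mapsto c-i$ of $\mathbb{Z}/m$ with $a_{c-i}=a_i^{-1}$, and your parity argument (for $m$ odd, $2$ is invertible mod $m$; for $m$ even, the absence of a fixed point forces $c$ odd, whence $2i\equiv c-1$ is solvable and some adjacent pair satisfies $a_{i+1}=a_i^{-1}$, contradicting cyclic reducedness since paired syllables lie in the same factor) produces a fixed syllable, i.e.\ an element of order two in some $\pi_1(M_i)$; the same conjugacy theorem also handles the first case, since nontrivial elements of factors that are conjugate in the free product are already conjugate inside a common factor. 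The paper instead uses stable commutator length: writing $\pi_1(M)=A*B$ with $A=\pi_1(M_1)$, it notes that $\mathrm{scl}(g)=0$ for any generalized torsion element \cite{IMT_PAMS2019}, invokes Chen's spectral gap theorem for free products \cite{Ch2018} to get $\mathrm{scl}(g^2)\ge \frac{1}{2}-\frac{1}{N}$ with $N$ the minimal order of a syllable, concludes $N=2$, and then applies \cite[Corollary~4.1.4]{MKS2004} to push the resulting order-two torsion element from $B$ into a single $\pi_1(M_j)$; the first case is handled by citing the procedure of \cite[Theorem~1.5]{IMT_PAMS2019}. Your approach buys elementarity and self-containedness: it avoids Chen's theorem entirely, works directly in the $n$-factor free product (so no extra localization step is needed), and exhibits the involution explicitly as the syllable fixed by the reflection. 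The paper's approach is shorter modulo the cited scl machinery and is consistent with the scl techniques it uses elsewhere (e.g.\ in the hyperbolic case). One cosmetic remark: your aside about $S^2\times S^1$ is unnecessary, since the Seifert--van Kampen decomposition $\pi_1(M)\cong \pi_1(M_1)*\cdots*\pi_1(M_n)$ holds regardless of irreducibility and your argument never needs to discard any factor.
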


So we may assume $M$ is prime. 
As we mentioned above, 
$S^2 \times S^1$ is the unique orientable $3$--manifold which is prime, 
but not irreducible. 
Since $\pi_1(S^2\times S^1)=\mathbb{Z}$ is abelian and torsion-free,  
it admits no generalized torsion elements.
Hence, in what follows we assume that $M$ is irreducible.

%%%%%%%
Obviously a usual torsion element is a generalized torsion element.
The next observation for finite $3$--manifold groups is well known (see \cite[p.452]{Sc1983}).

\begin{theorem}\label{thm:finite_case}
Let $G$ be the fundamental group of a closed orientable $3$--manifold.
Assume that $G$ is finite.
Then $G$ admits a torsion element of order two if and only if
$G$ is not a finite cyclic group of odd order.
\end{theorem}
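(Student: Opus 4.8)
The plan is to reduce this topological statement to a purely group-theoretic dichotomy and then feed in the classification of finite fundamental groups of closed orientable $3$--manifolds. First I would observe that ``$G$ admits a torsion element of order two'' is equivalent to ``$|G|$ is even'': if $g^2=1$ with $g\ne 1$ then $2$ divides $|G|$ by Lagrange's theorem, while conversely Cauchy's theorem produces an element of order $2$ whenever $2$ divides $|G|$. Under this reformulation the asserted equivalence becomes the statement that a finite fundamental group of a closed orientable $3$--manifold has \emph{odd} order if and only if it is cyclic of odd order. The reverse implication is immediate, since a cyclic group of odd order has no element of even order; the forward implication is the substantive one.

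Thus the heart of the matter is the claim that \emph{every finite fundamental group $G$ of a closed orientable $3$--manifold having odd order is cyclic}. I would derive this from the fact that $G$ acts freely on the universal cover $\widetilde{M}$, which is a closed simply connected orientable $3$--manifold. By Poincar\'e duality together with the Hurewicz theorem, $\widetilde{M}$ is a homology $3$--sphere, so a free action of $G$ on a space with the homology of $S^3$ forces $G$ to have periodic cohomology of period dividing $4$. (Notably, this input does not require geometrization.)

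From periodicity I would argue as follows. For an odd prime $p$, a $p$--group with periodic cohomology is cyclic, so every Sylow subgroup of the odd-order group $G$ is cyclic; hence $G$ is metacyclic, $G\cong \mathbb{Z}/m \rtimes \mathbb{Z}/n$ with $\gcd(m,n)=1$. The cohomological period of such a group equals $2d$, where $d$ is the order in $(\mathbb{Z}/m)^{\times}$ of the defining automorphism. Since $|G|=mn$ is odd, $d$ divides the odd number $n$ and is therefore odd; but period dividing $4$ gives $2d\mid 4$, hence $d\mid 2$ and so $d=1$. Then the extension is trivial and $G\cong \mathbb{Z}/m \times \mathbb{Z}/n\cong \mathbb{Z}/{mn}$ is cyclic. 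Assembling the cases: if $G$ has an element of order two it cannot be cyclic of odd order; and if $G$ is not cyclic of odd order, then either it is cyclic of even order, or it is non-cyclic and hence (by the claim) of even order, and in both cases Cauchy's theorem yields an element of order two.

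The step I expect to be the main obstacle is pinning down the period of the metacyclic group precisely enough to conclude $d=1$; this is exactly where one uses that $G$ is a spherical space form group rather than an arbitrary periodic group. A route that avoids this cohomology computation altogether is to quote directly the classification of finite $3$--manifold groups recorded in \cite{Sc1983}: scanning the families, the generalized quaternion groups, the binary polyhedral groups, and their cyclic extensions all have even order and in fact contain a (unique, central) element of order two, whereas the cyclic groups are the only members of odd order. Either approach closes the argument.
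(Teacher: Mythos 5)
Your proposal is correct, and its main line of argument is genuinely different from the paper's. The paper gives no proof of Theorem~\ref{thm:finite_case} at all: it records the statement as a well-known observation and simply points to the list of finite $3$--manifold groups in \cite[p.452]{Sc1983}. That is exactly your fallback route: by elliptization, every closed orientable $3$--manifold with finite fundamental group is a spherical space form, and inspection of Scott's list shows that the odd-order cyclic groups are the only entries with no element of order two. Your primary argument replaces this by a self-contained cohomological one: after the (correct) reduction via Lagrange and Cauchy to ``odd order implies cyclic,'' you use that the universal cover is a homotopy $3$--sphere, so the free deck action gives $G$ periodic cohomology of period dividing $4$ (the action is automatically orientation-preserving here because $M$ is orientable, or by a Lefschetz fixed-point argument on an odd-dimensional homology sphere, so there is no hidden orientation issue); for $|G|$ odd, periodicity forces every Sylow subgroup to be cyclic, hence $G\cong \mathbb{Z}/m\rtimes\mathbb{Z}/n$ with $\gcd(m,n)=1$ by H\"older--Burnside--Zassenhaus; and the period of such a group is $2d$, where $d$ is the order of the defining automorphism, so $2d\mid 4$ together with $d\mid n$ odd forces $d=1$. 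The one step that genuinely needs a citation is the one you flag yourself: the period formula $2d$, which follows from Swan's theorem that the $p$-period equals $2\,|N_G(P)/C_G(P)|$ for a cyclic Sylow $p$-subgroup $P$ (or from the explicit metacyclic computation in Cartan--Eilenberg, Ch.~XII); with that reference supplied, your argument is complete. What your route buys is independence from geometrization and from the classification --- it uses only that $G$ acts freely on a homology $3$--sphere; what the paper's route buys is brevity, a single citation, at the cost of invoking elliptization so that Scott's list is known to cover all finite $3$--manifold groups.
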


Thus, for a finite $3$--manifold group $G$,
it always contains a generalized torsion element of order two, unless $G$ is a finite cyclic group
of odd order.
%Obviously, a finite cyclic group of odd order cannot contain such an element.
Conversely, for a finite cyclic group of odd order, a generalized torsion element is necessarily a torsion element,
so it cannot contain a generalized torsion element of order two.

%%%%%%%%%

In this article we will focus on generalized torsion elements of order two. 
As we observed, such a generalized torsion element appears in the fundamental group of the Klein bottle. 

Let $g$ be a generalized torsion element of order two in a torsion-free group $G$.
Then, by definition, there exist $a$ and $b$ in $G$ such that $g^ag^b=1$.
By taking a conjugation with $b^{-1}$, 
we have $g^{ab^{-1}}g =1$. 
Thus we may simplify the expression $g^ag^b=1$ to $g^c g =1$ 
using the non-trivial element $c = ab^{-1}$, equivalently $g g^{c} =1$. 
We call the pair $(g, c)$ a \textit{generalized torsion pair}; 
by notation, it is naturally understood that the generalized torsion element $g$ has order two. 
The simplified expression $g^c g = 1$ shows that $g^c = g^{-1}$, i.e. $g$ is conjugate to its inverse, and conversely,
such a non-trivial element gives a generalized torsion element of order two.

Assume that $(g, c)$ is a generalized torsion pair  in $G$. 
Let $H$ be a subgroup of $G$. 
If $g$ and $c$ belong to $H$, then we say that $(g,c)$ is a generalized torsion pair of $H$.
Even when $g \in H$, if $c \not\in H$, then $(g,c)$ is not a generalized torsion pair of $H$.

At first we state our classification theorem for torsion-free $3$--manifold groups admitting generalized torsion elements of order two, 
which generalizes \cite{HMT2023}. 

Throughout this article, we use the term Seifert fiber space to mean a $3$--manifold equipped with a fixed Seifert fibration. 
For a Seifert fiber space $X$, identifying each Seifert fiber to a point, we obtain a (possibly non-orientable) surface 
$B_X$, which is called a \textit{base surface}, and a natural projection $p_X \colon X \to B_X$. 
If $t$ is an exceptional fiber of index $p$, 
then $p_X(t) \in B_X$ is called a \textit{cone point} of order $p$.

Let $M$ be a  compact, orientable,  irreducible $3$--manifold. 
Consider the torus decomposition (JSJ--decomposition) \cite{JS1979,J1979} of $M$. 
A \textit{Seifert fibered decomposing piece} $X$ of $M$ is a decomposing piece with respect to the torus decomposition of $M$ which is a Seifert fiber space; 
possibly $X = \varnothing$ or $X = M$.

\begin{theorem}
\label{g-torsion_order2}
Let $M$ be a compact, orientable,  irreducible $3$--manifold whose fundamental group $G$ has no torsion. 
Assume that $G$ has a generalized torsion element $g$ of order two. 
Then $M$ has a Seifert fibered decomposing piece $X$ for which we have\textup{:} 
\begin{enumerate}
\item[(1)]
$g$ is conjugate to a generalized torsion element $g'$ of $\pi_1(X) \subset G$;
more precisely, 
$(g',c')$ is a generalized torsion pair of $\pi_1(X)$ for some $c'\in \pi_1(X)$,
and 
\item[(2)]
$X$ has an exceptional fiber of even index, 
or $B_X$ is non-orientable.

\end{enumerate}
\end{theorem}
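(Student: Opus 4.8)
The plan is to work with the torus (JSJ) decomposition of $M$ and the induced action of $G=\pi_1(M)$ on the associated Bass--Serre tree $T$, whose vertex stabilizers are conjugates of the fundamental groups $\pi_1(X_i)$ of the decomposing pieces and whose edge stabilizers are conjugates of the $\mathbb{Z}\oplus\mathbb{Z}$ carried by the decomposing tori. The whole argument rests on the fact that this action is acylindrical: since $G$ is torsion-free, the stabilizer of every sufficiently long segment of $T$ is trivial. I would reduce the statement to two largely independent tasks, namely (1) relocating the generalized torsion pair $(g,c)$, where $g^c=g^{-1}$, into a single vertex group, and (2) a purely internal analysis of Seifert fibered pieces.

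For the relocation, note that $g$ and $g^{-1}$ have the same fixed-point set in $T$, while $\mathrm{Fix}(g^c)=c^{-1}\mathrm{Fix}(g)$; since $g^c=g^{-1}$, the element $c$ must stabilize the subtree $F:=\mathrm{Fix}(g)$. I would first rule out $g$ being hyperbolic: if $g$ translated along an axis $A$, then $c$ would have to reverse $A$ and hence (after subdividing $T$ to avoid inversions) fix a vertex of $A$, so that $c^2$ fixes $A$ pointwise; acylindricity would then give $c^2=1$, and torsion-freeness $c=1$, contradicting $g\ne g^{-1}$. Thus $g$ is elliptic and $F$ is a non-empty $c$-invariant subtree which, again by acylindricity, has bounded diameter. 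Stabilizing a bounded subtree, $c$ fixes its canonical center, a vertex $w\in F$, and then $g,c\in G_w$; conjugating yields a generalized torsion pair $(g',c')$ lying entirely in some $\pi_1(X)$. Here $g'$ cannot be carried by an edge group, since in an abelian group an element conjugate to its inverse equals its inverse, impossible for a non-trivial torsion-free element.

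It remains to identify $X$ and to verify (2). If $X$ were atoroidal, then $\pi_1(X)$ would be a torsion-free (relatively) hyperbolic group, in which any $g'$ conjugate to its inverse is trivial: the conjugator normalizes the maximal cyclic centralizer of $g'$ acting by inversion, and the relation $c'^2\in C(g')$ forces, after a short computation, $c'=1$. Hence $X$ must be Seifert fibered, which is (1). For (2) I would use the extension $1\to\langle h\rangle\to\pi_1(X)\to\pi_1^{\mathrm{orb}}(B_X)\to 1$ and argue contrapositively: if $B_X$ is orientable and every exceptional fiber has odd index, then $\pi_1^{\mathrm{orb}}(B_X)$ is an orientation-preserving $2$--orbifold group containing no element of order two, so no non-trivial element of it is conjugate to its inverse, because the only isometries conjugating a hyperbolic, parabolic, or elliptic element to its inverse are order-two elliptics, which correspond precisely to even cone points. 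Projecting $g'^{c'}=g'^{-1}$ then forces $g'\in\langle h\rangle$, and centrality of $h$ (orientable base) gives $h^{2k}=1$, whence $g'=1$, a contradiction. Therefore $X$ has an exceptional fiber of even index, or $B_X$ is non-orientable.

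I expect the main obstacle to be the relocation step, specifically the bookkeeping needed to ensure that the conjugator $c$, and not merely $g$, descends into the vertex group, together with establishing acylindricity in the required cases (non-closed pieces, peripheral tori, and small Seifert fibered spaces) and checking the degenerate base orbifolds, both spherical and Euclidean, in the final step.
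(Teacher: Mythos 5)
Your strategy (Bass--Serre theory on the JSJ tree, relocation of the pair $(g,c)$ into a vertex group, then an internal analysis of hyperbolic and Seifert vertex groups) is genuinely different from the paper's, which realizes the pair by a $\pi_1$-injective map of a Klein bottle, pushes that map into a Seifert piece by characteristic submanifold theory, and then applies Hass's vertical/horizontal theorem. Your vertex-group analysis is essentially sound. But the relocation step rests on a false claim: acylindricity of the JSJ action does \emph{not} follow from torsion-freeness of $G$. Acylindricity of JSJ actions is a nontrivial theorem whose proof uses that adjacent Seifert pieces induce non-commensurable fiber classes on the common torus and that peripheral subgroups of hyperbolic pieces are malnormal (see Wilton--Zalesskii), and it genuinely fails for Sol manifolds, which are torsion-free and squarely within the scope of the theorem. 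For a torus bundle with Anosov monodromy the JSJ tree is a line and the fiber $\mathbb{Z}^2$ stabilizes every segment. Worse, for a Sol torus semi-bundle $N_1\cup_T N_2$, where each $N_i$ is a twisted $I$--bundle over the Klein bottle, the edge group $\pi_1(T)\cong\mathbb{Z}^2$ is normal in $G$ (it has index two in each vertex group) and therefore fixes the entire tree; and here generalized torsion elements of order two \emph{do} exist (the cores of the $N_i$ are Klein bottles), so this case cannot be dismissed as vacuous. For such a $g$ the subtree $\mathrm{Fix}(g)$ is unbounded, there is no canonical center, and a priori $c$ could act as a hyperbolic isometry of the tree; excluding that possibility requires a direct computation with the Anosov gluing matrix (no power of an Anosov matrix has $-1$ as an eigenvalue), which the tree argument cannot supply.

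So as written the key step is unjustified and fails on a concrete family of examples. The proof can most likely be repaired by (i) quoting or proving $k$--acylindricity of the JSJ action for manifolds that are not Sol (and handling the degenerate trees coming from trivial JSJ decompositions and from manifolds with boundary), and (ii) treating Sol torus bundles and semi-bundles separately by the eigenvalue argument above. Note that the paper's route through an essential map of the Klein bottle and the enclosing property of the characteristic submanifold avoids this issue entirely, since that machinery applies uniformly to all Haken manifolds, Sol manifolds included.
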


In \cite{IMT_PAMS2019}, we investigate a behavior of generalized torsion elements under torus decomposition, 
and we demonstrate that a generalized torsion element  is not local in a sense that there are infinitely many toroidal $3$--manifolds, each of  whose fundamental group has a generalized torsion element, 
while the fundamental group of any decomposing piece has no such elements. 
However, restricting to a generalized torsion element of order two, 
Theorem~\ref{g-torsion_order2} (1) shows that such an element is local. 

\begin{corollary}
\label{local_g-torsion}
Any generalized torsion element of order two is local with respect to a torus decomposition. 
That is, if $g$ is a generalized torsion element of order two in the fundamental group of a toroidal $3$--manifold $M$, 
then it is conjugate to a generalized torsion element of order two in the fundamental group of a single decomposing piece of $M$. 
\end{corollary}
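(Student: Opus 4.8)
The plan is to deduce this directly from Theorem~\ref{g-torsion_order2}, after checking that its standing hypotheses hold. The only apparent gap is that Theorem~\ref{g-torsion_order2} requires $G=\pi_1(M)$ to be torsion-free, whereas the corollary is phrased for a toroidal $M$ without a torsion-free assumption. First I would observe that this gap is automatically filled. Since $M$ is toroidal it contains an essential torus, so $\pi_1(M)$ contains a copy of $\mathbb{Z}\oplus\mathbb{Z}$ and is in particular infinite. As $M$ is (by our standing convention) compact, orientable, and irreducible, a compact orientable irreducible $3$--manifold with infinite fundamental group is aspherical, and an aspherical $3$--manifold has torsion-free fundamental group. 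Hence $G$ is torsion-free, and Theorem~\ref{g-torsion_order2} applies to $g$.

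Next I would apply the theorem to the given generalized torsion element $g$ of order two. It supplies a Seifert fibered decomposing piece $X$ of $M$ together with an element $g'$, conjugate to $g$ in $G$, such that $(g',c')$ is a generalized torsion pair of $\pi_1(X)$ for some $c'\in\pi_1(X)$. By the very definition of a generalized torsion pair, the relation ${g'}^{c'}g'=1$ holds with both $g'$ and $c'$ lying in $\pi_1(X)$; thus $g'$ is a generalized torsion element of $\pi_1(X)$ whose order, computed inside $\pi_1(X)$, is at most two, and since $g'$ is non-trivial its order is at least two, hence exactly two. As $X$ is a single decomposing piece of the torus decomposition of $M$, this is precisely the statement that $g$ is conjugate to a generalized torsion element of order two in the fundamental group of a single decomposing piece, which is the assertion of the corollary.

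The point I expect to matter most is not in this short deduction but in Theorem~\ref{g-torsion_order2} itself: locality could fail exactly at the question of whether the conjugating witness $c'$ can be taken inside $\pi_1(X)$ rather than merely in the ambient group $G$. If only $g'$ were known to lie in $\pi_1(X)$, the relation ${g'}^{c'}g'=1$ would not certify that $g'$ is a generalized torsion element of the subgroup $\pi_1(X)$, and the locality conclusion would break down; this is exactly the higher-order phenomenon recorded in \cite{IMT_PAMS2019}. Consequently, the genuine work here is the torsion-free reduction above, after which the corollary is immediate from the ``more precisely'' clause of Theorem~\ref{g-torsion_order2}.
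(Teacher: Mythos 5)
Your proposal is correct and takes essentially the same route as the paper: Corollary~\ref{local_g-torsion} is read off directly from the ``more precisely'' clause of Theorem~\ref{g-torsion_order2}~(1), which places both $g'$ and the conjugating witness $c'$ inside $\pi_1(X)$. Your preliminary check that a toroidal $M$ (compact, orientable, irreducible, with $\mathbb{Z}\oplus\mathbb{Z}\subset\pi_1(M)$, hence aspherical) has torsion-free fundamental group simply makes explicit the hypothesis that the paper leaves implicit when passing from the theorem to the corollary.
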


Theorem~\ref{g-torsion_order2} determines $3$--manifolds, each of whose fundamental group admits a generalized torsion element of order two. 
Now we proceed to classify generalized torsion elements of order two (up to conjugation)
\footnote{After the submission of this paper, Das and Das \cite{DD2024}
gave a similar classification of reversible elements, equivalently generalized torsion elements of order two, in the fundamental group of
a Seifert fiber space by using completely different arguments.
}.

Before stating our classification theorem, 
we prepare some elementary facts about generalized torsion pairs. 

%We say that a generalized torsion pair $(g, c)$ is equivalent to $(g', c')$ if 
%there exists an element
 %$x \in G$ such that $(g', c') = (x^{-1} g x,\ x^{-1} c x)$. 
For a generalized torsion pair $(g,c)$,
set $g'=g^x$ and $c'=c^x$ for $x \in G$.
Then the pair $(g',c')$ gives a generalized torsion pair.
For, the equation $g^cg=1$ implies 
\[
1=(g^cg)^x= (g^c)^xg^x=(g^x)^{c^x}g^x=(g')^{c'} g'.
\]

\begin{lemma}
\label{property_g-pair}
 Let $(g,c)$ be a generalized torsion pair in a group $G$, and
let $d$ be an element such that $c^{-1}d$ belongs to the centralizer of $g$. 
Then $(g, d)$ is also a generalized torsion pair.
\end{lemma}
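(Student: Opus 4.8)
The plan is to reduce the assertion to the single conjugation identity $g^d=g^{-1}$, since $(g,d)$ being a generalized torsion pair is by definition equivalent to $g^dg=1$, i.e.\ to $g^d=g^{-1}$. First I would translate both hypotheses into exponential notation using the convention $g^x=x^{-1}gx$. The pair $(g,c)$ means precisely $g^cg=1$, equivalently $g^c=g^{-1}$. Setting $z:=c^{-1}d$, the hypothesis $c^{-1}d\in C_G(g)$ says exactly that $z$ centralizes $g$, i.e.\ $g^z=g$. Finally I would record the factorization $d=cz$, which is what lets me evaluate $g^d$ by iterating the conjugation rule $g^{xy}=(g^x)^y$.

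The key step is then the direct computation
\[
g^d=g^{cz}=(g^c)^z=(g^{-1})^z=(g^z)^{-1}=g^{-1},
\]
where the second equality is the composition law for conjugation, the third substitutes the defining relation $g^c=g^{-1}$ of the original pair, the fourth uses $(g^{-1})^z=z^{-1}g^{-1}z=(z^{-1}gz)^{-1}=(g^z)^{-1}$, and the last uses that $z$ centralizes $g$. (The only small remark worth making explicitly is that centralizing $g$ forces $z$ to centralize $g^{-1}$ as well, which is what legitimizes the last two equalities.) From $g^d=g^{-1}$ we immediately get $g^dg=1$, so $(g,d)$ is a generalized torsion pair.

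Honestly there is no genuine obstacle here: the statement is a bookkeeping identity in the conjugation calculus, and the proof is a four-line manipulation. The only point requiring a word of care is conceptual rather than computational, namely to confirm that $g$ continues to qualify as a generalized torsion element of order two for the new pair. This is automatic, since $g$ is unchanged from the pair $(g,c)$ and is therefore still the same non-trivial element conjugate to its own inverse; the displayed relation $g^d=g^{-1}$ merely exhibits $d$ as an alternative conjugating element realizing this, so nothing about $g$ itself needs to be re-verified.
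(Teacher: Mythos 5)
Your proof is correct and is essentially the same as the paper's: both are direct conjugation-calculus computations transferring the relation $g^c=g^{-1}$ to $d$ via the centralizing element $c^{-1}d$. The only cosmetic difference is that you factor $d=cz$ and apply the composition law $g^{cz}=(g^c)^z$, while the paper instead derives $g^{c^{-1}}=g^{d^{-1}}$ and substitutes it into $gg^{c^{-1}}=1$.
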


\begin{proof}
Since $c^{-1}d$ lies in the centralizer of $g$, $(c^{-1}d) g = g (c^{-1}d)$, 
which implies $g^{c^{-1}}=g^{d^{-1}}$.
Since $(g, c)$ is a generalized torsion pair,  
we have $g^c g = 1$, which also implies $g g^{c^{-1}} = 1$. 
Thus $g g^{d^{-1}} = 1$, i.e., $g^d g = 1$, and $(g, d)$ is also a generalized torsion pair. 
\end{proof}

\begin{lemma}
\label{power}
Assume that $G$ is torsion-free. 
If $(g,c)$ is a generalized torsion pair of $G$, 
then so is $(g^n,c)$ for any integer $n\ne 0$.
\end{lemma}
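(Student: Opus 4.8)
The plan is to exploit the defining relation of a generalized torsion pair in its multiplicative form. By hypothesis $(g,c)$ satisfies $g^c g = 1$, which I rewrite as $g^c = g^{-1}$. Since conjugation by $c$ is an automorphism of $G$, it commutes with taking powers, so the first step is simply to record that $(g^n)^c = (g^c)^n$ for every integer $n$, and then substitute the relation $g^c = g^{-1}$.

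Carrying this out gives
\[
(g^n)^c = (g^c)^n = (g^{-1})^n = g^{-n},
\]
whence $(g^n)^c\, g^n = g^{-n} g^n = 1$. This single identity already exhibits the product of two conjugates of $g^n$ equal to the identity, so the order of $g^n$ as a (generalized) torsion element is at most two.

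It then remains to confirm that $g^n$ really is a generalized torsion element of order \emph{exactly} two, and this is the only place where the torsion-free hypothesis enters. Because $G$ is torsion-free and $g\ne 1$ with $n\ne 0$, the power $g^n$ is non-trivial; in particular $g^n$ is not a usual torsion element, so the relation $(g^n)^c g^n = 1$ genuinely witnesses generalized torsion rather than degenerating. Since every generalized torsion element has order at least two (a single conjugate $x^{-1}g^n x$ cannot be the identity unless $g^n=1$), combining this lower bound with the upper bound above yields order exactly two, so $(g^n,c)$ is a generalized torsion pair. I expect no serious obstacle: the computation is a one-line power manipulation, and the only subtlety is ruling out the degenerate possibilities for $g^n$, which is precisely what torsion-freeness supplies.
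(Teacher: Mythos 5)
Your proof is correct and essentially identical to the paper's: both rewrite $g^c=g^{-1}$, use that conjugation commutes with taking powers to get $(g^n)^c g^n = g^{-n}g^n = 1$, and invoke torsion-freeness only to guarantee $g^n \ne 1$. Your extra remarks confirming the order is exactly two are harmless elaborations of what the paper leaves implicit.
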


\begin{proof}
Since $(g, c)$ is a generalized torsion pair, 
we have $g^c g =1$,
 so $g^c=g^{-1}$.  Then $g^n\ne 1$ (by the assumption) and
\[
(g^n)^c g^n = (g^{c})^n g^n =(g^{-1})^n g^n= g^{-n}g^n=1. 
\]
Hence $(g^n, c)$ is also a generalized torsion pair. 
\end{proof}

We say that $(g,c)$ is \textit{primitive\/} if
there is no generalized torsion pair $(h,c)$ with $g=h^n$ for some $n\ne \pm 1$.
Following Lemma~\ref{power} we restrict our attention to primitive generalized torsion pairs $(g, c)$.  

\begin{remark}
\label{order2_power}
Lemma~\ref{power} shows that any non-trivial power of a generalized torsion element of order two is also 
a generalized torsion element of order two in a torsion-free group. 
This is a distinguished property of generalized torsion elements of order two. 
In general we do not expect that a power of a generalized torsion element is a generalized torsion element again. 
\end{remark}

Let $G$ be the fundamental group of irreducible, aspherical $3$--manifold $M$.
Then $G=\pi_1(M)$ is torsion-free. 
Following Theorem~\ref{g-torsion_order2}, 
if $G$ has a generalized torsion element $g$ of order two, 
then $M$ has a Seifert fibered decomposing piece $X$, which has an exceptional fiber of even index or non-orientable base surface $B_X$. 
Taking a suitable conjugation we have a generalized torsion pair $(g, c)$ of $\pi_1(X)$. 
The next result classifies generalized torsion pairs of $\pi_1(X)$.

When $B_X$ has a cone point, we choose a cone point as the base point $x_0$.
Otherwise, $x_0$ is any point in $B_X$. 
We say that a loop with a base point $x_0$ on $B_X$ is \textit{non-trivial} if it is not homotoped to the base point $x_0$ without passing other cone points of $B_X$. 

To state our result we need some terminologies. 
\begin{definition}
\label{folded_loop}
Let $x_0$ is the base point, and $x_1$ another point on $B_X$.
\begin{enumerate}
\item
A closed curve $\alpha \colon [0, 1] \to B_X$ is  an
\textit{$x_0$--folded loop based at $x_0$} if 
$\alpha(t) = \alpha(1-t)$, 
$\alpha(0) = \alpha(1) = x_0$ and $\alpha(1/2) = x_0$; see Figure~\ref{folded_curve} (Left). 

\item
A closed curve $\alpha \colon [0, 1] \to B_X$ is an
\textit{$x_1$--folded loop based at $x_0$} if $\alpha(t) = \alpha(1-t)$, 
$\alpha(0) = \alpha(1) = x_0$ and $\alpha(1/2) = x_1$; see Figure~\ref{folded_curve} (Right). 

\item
We say that an $x_0$--folded loop $\alpha$ based at $x_0$ is \textit{non-trivial} if $\alpha$ cannot be homotoped to a constant map 
$e_{x_0}$ keeping $\alpha(0), \alpha(1/2)$ and $\alpha(1)$ fixed and does not pass any cone point of $B_X$. 

\end{enumerate}
\end{definition}

In the following, for notational simplicity, 
a representative closed curve of $g \in \pi_1(X)$ is also denoted by the same symbol $g$, 
and apply the above terminologies to $p_X^*(g)=p_X \circ g \colon [0, 1] \to B_X$ for $g \in \pi_1(X)$.

\begin{theorem}
\label{classification}
Let $X$ be an irreducible Seifert fiber space with infinite fundamental group.
Let  $(g, c)$ be a primitive generalized torsion pair in $\pi_1(X)$.
Then $(g, c)$ satisfies one of the following. 
\begin{enumerate}
\renewcommand{\labelenumi}{(\roman{enumi})}
\item
$c$ is a power of an exceptional fiber $t_0$ of even index with $x_0 = p_X(t_0)$, 
and $p_X^*(g)$ is a non-trivial, orientation preserving $x_0$--folded loop based at $x_0$ on $B_X$. 
See Figure~\ref{folded_curve} \textup{(Left)}.
\item
$c$ is a power of an exceptional fiber $t_0$ of even index with $x_0 = p_X(t_0)$, 
and $p_X^*(g)$ is an $x_1$--folded loop based at $x_0$ on $B_X$, 
where $x_1$ is a cone point $p_X(t_1)$ for some exceptional fiber $t_1 \ne t_0$ of even index. 
See Figure~\ref{folded_curve}\textup{ (Right)}.
\item 
$g$ is a regular fiber $t_0$, 
and $p_X^*(c)$ is an orientation reversing loop based at $x_0 = p_X(t_0)$ in $B_X$ \textup{(}hence $B_X$ is non-orientable\textup{)}.
\item
$g$ is an orientation preserving loop and $c$ is an orientation reversing loop of a \textup{(}possibly immersed\textup{)} horizontal Klein bottle,
 when 
$B_X$ is the projective plane $\mathbb{R}P^2$ or $X$ is a circle bundle over the Klein bottle. 
\end{enumerate}
\end{theorem}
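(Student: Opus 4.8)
The plan is to read the equation $g^cg=1$, equivalently $g^c=g^{-1}$, through the Seifert structure. First I would invoke the extension attached to the fibration,
\[
1 \longrightarrow \langle h\rangle \longrightarrow \pi_1(X) \xrightarrow{\ p_*\ } Q \longrightarrow 1,
\]
where $h$ is a regular fiber and $Q=\pi_1^{\mathrm{orb}}(B_X)$. As $\pi_1(X)$ is infinite and $X$ irreducible, $\langle h\rangle\cong\mathbb Z$ and $Q$ acts properly on $\mathbb E^2$ or $\mathbb H^2$. The fibration gives the orientation character $w\colon Q\to\{\pm1\}$ of $B_X$ with $h^c=h^{\,w(p_*(c))}$, so $w\equiv 1$ exactly when $B_X$ is orientable. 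Writing $\bar g=p_*(g)$, $\bar c=p_*(c)$ and projecting yields $\bar g^{\,\bar c}=\bar g^{-1}$ in $Q$, so $\bar g$ is conjugate to its inverse. The argument then splits according to the nature of $\bar g$; throughout I would use the freedom (recorded just before Lemma~\ref{property_g-pair}) to replace $(g,c)$ by a simultaneous conjugate, and Lemma~\ref{property_g-pair} to adjust $c$ modulo the centralizer of $g$.

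The first dichotomy is whether $\bar g=1$. If so then $g=h^k$, and $g^c=h^{k\,w(\bar c)}=g^{-1}=h^{-k}$ forces $w(\bar c)=-1$, so $\bar c$ is orientation reversing and $B_X$ is non-orientable. By Lemma~\ref{power} the pair $(h,c)$ is again a generalized torsion pair, so primitivity of $(g,c)$ forces $k=\pm1$; thus $g$ is the regular fiber $t_0$ and $p_X^*(c)$ is an orientation-reversing loop based at $x_0=p_X(t_0)$, which is case~(iii).

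When $\bar g\neq1$ the heart of the proof is to classify the nontrivial \emph{reversible} elements of the $2$--orbifold group $Q$. Using the action on $\mathbb E^2$ or $\mathbb H^2$, a nontrivial element conjugate to its inverse is either (a) an involution, necessarily elliptic and hence conjugate into a cone-point stabilizer $\langle x_i\rangle\cong\mathbb Z/p_i$, where it is the unique order-two element $x_i^{\,p_i/2}$ so that $p_i$ is \emph{even}; or (b) an element reversed only by an orientation-reversing isometry. In subcase~(a) I lift back: the even cone point is the image of an exceptional fiber of even index, and going $k$ times around it realizes meridional monodromy by $2\pi qk/p_i$, which equals $\pi$ for suitable $k$ precisely because $p_i$ is even. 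Since $\bar g$ has order two and $G$ is torsion free, $g^2=h^{\,l}$ with $l\neq0$, and $c=t_0^{\,k}$ implements $g\mapsto g^{-1}=g\,h^{-l}$ by this half-turn. Projecting the resulting symmetric representative via $p_X^*$ produces a non-trivial folded loop in the sense of Definition~\ref{folded_loop}: its fold point lies either over the \emph{same} even exceptional fiber $t_0$ giving the orientation-preserving $x_0$--folded loop of case~(i), or over a \emph{different} even exceptional fiber $t_1$ giving the $x_1$--folded loop of case~(ii). The main obstacle is exactly this translation: choosing the representative so that $p_X^*(g)$ is genuinely folded, computing the monodromy and the shift $g^2=h^l$, and checking that the half-turn reproduces $g^{-1}$ \emph{on the nose} rather than up to a stray fiber power, while distinguishing the $x_0$-- and $x_1$--folds and confirming non-triviality.

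It remains to treat subcase~(b), where $\bar g$ is orientation preserving and $\bar c$ orientation reversing. Here I would use exactness of the fiber extension to measure the fiber discrepancy of $g^cg$: for orientation-reversing $\bar c$ the exact relation $g^c=g^{-1}$ forces a rigid compatibility between the translation of $\bar g$ and the reflection $\bar c$, which over a Euclidean or hyperbolic base orbifold of positive complexity cannot be met by a primitive pair unless $g$ is already vertical, returning us to case~(iii). The only survivors are the small Euclidean geometries in which $B_X=\mathbb R P^2$ (with cone points) or $X$ is a circle bundle over the Klein bottle; there $(g,c)$ is carried by a (possibly immersed) horizontal Klein bottle with $g$ its orientation-preserving and $c$ its orientation-reversing core, namely case~(iv). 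Primitivity together with Lemma~\ref{power} is used throughout to discard proper powers and normalize $g$ and $c$ to the stated generators.
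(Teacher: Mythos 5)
The extension-theoretic framework you set up is sound, and your disposal of the case $\bar{g}=1$ (leading to case (iii)) is correct; but the claimed classification of reversible elements of $Q$, on which cases (i), (ii) and (iv) all rest, is false, and it fails precisely on the elements the theorem is about. Take $X$ to be the twisted $I$--bundle over the Klein bottle with its Seifert fibration over the disk with two cone points of order two, so that $\pi_1(X)=\langle a,b \mid a^{-1}bab=1\rangle$, $h=a^{2}$, and $Q=\pi_1(X)/\langle h\rangle \cong \mathbb{Z}/2 * \mathbb{Z}/2$, generated by the involutions $r=\bar{a}$ and $s=\bar{a}\bar{b}$. For the generalized torsion pair $(g,c)=(b,a)$ one has $\bar{g}=\bar{b}=rs$: an element of infinite order, not an involution, reversed by the orientation-preserving elliptic involution $r=\bar{c}$. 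So $\bar{g}$ lies in neither of your classes (a) and (b), and the dichotomy collapses. This is not a peripheral example: in cases (i) and (ii) the image in $Q$ of a folded loop is always such a product of two $\pi$--rotations about even cone points (infinite order, orientation preserving), and it is $\bar{c}$, not $\bar{g}$, that is the cone-point involution. Your subcase (a) inverts these roles, and is in fact vacuous: if $\bar{g}$ were elliptic of order $p\ge 2$, torsion-freeness gives $g^{p}=h^{l}$ with $l\neq 0$, and applying $g\mapsto g^{c}=g^{-1}$ yields $h^{l\,w(\bar{c})}=h^{-l}$, so $w(\bar{c})=-1$; this contradicts $c=t_0^{k}$, whose image in $Q$ is orientation preserving. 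Hence no instance of (i) or (ii) can arise from your subcase (a), and your argument produces the main cases of the theorem from a configuration that cannot occur.

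Second, your subcase (b) --- the source of case (iv) --- is asserted rather than proved: the statement that when $\bar{c}$ is orientation reversing and $\bar{g}\neq 1$ the pair must either reduce to case (iii) or live over $\mathbb{R}P^{2}$ with two cone points of order two or over the Klein bottle without cone points is exactly the hard content, and ``forces a rigid compatibility \dots cannot be met'' is not an argument. For comparison, the paper never projects to $Q$ at all: it converts $(g,c)$ into a $\pi_1$--injective immersed Klein bottle with $\varphi(b)=g$, $\varphi(a)=c$ (Lemma~\ref{Kbmap}), invokes Hass's theorem to make $\varphi$ vertical or horizontal (Proposition~\ref{verical_horizontal}), reads cases (i)--(iii) off the two circle foliations $\mathcal{F}_a$, $\mathcal{F}_b$ of the Klein bottle (Lemma~\ref{vertical}), and obtains the restriction on $B_X$ in case (iv) from the Riemann--Hurwitz computation for a horizontal Klein bottle (Lemma~\ref{horizontal}). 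An algebraic proof along your lines may well be feasible (compare the reversibility classification of Das--Das cited in the paper), but it would have to start from a correct classification of reversible elements of $Q$ --- in particular including infinite-order elements reversed by orientation-preserving elliptic involutions --- and then supply the missing rigidity argument in the orientation-reversing case.
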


\begin{figure}[h]
\begin{center}
\includegraphics*[bb=0 0 483 281, width=0.7\textwidth]{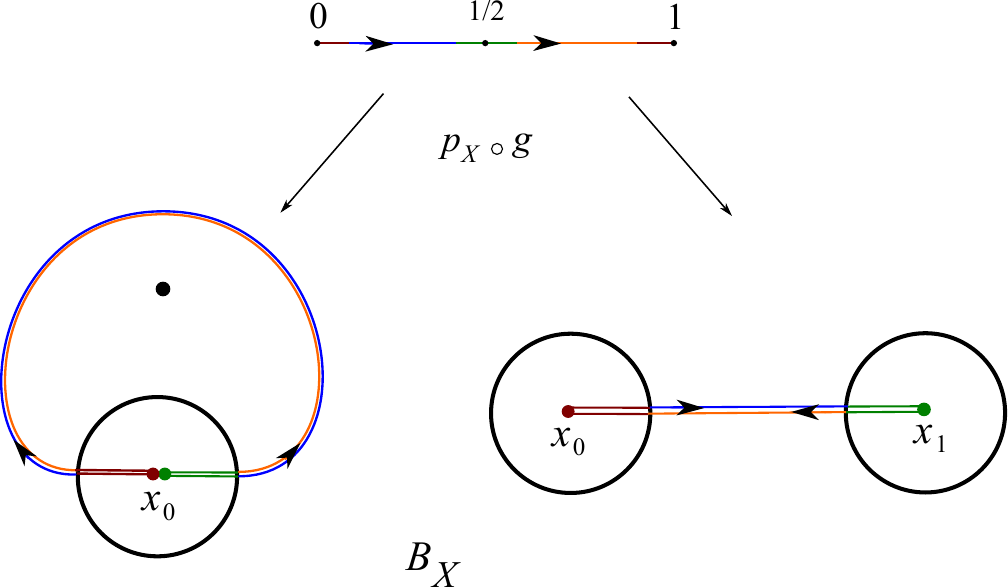}
\end{center}
\caption{A loop whose base point is a single cone point of even order and an arc whose end points are distinct cone points of even order.}
\label{folded_curve}
\end{figure}

\begin{remark}
\label{same_projection}
As shown in Lemma~\ref{property_g-pair}, 
a generalized torsion pair $(g, c)$ provides infinitely many such pairs using elements of the centralizer of $g$. 
These generalized torsion pairs have the same description as $(g,c)$ in Theorem~\ref{classification}.
\end{remark}

The following result generalizes \cite[Theorem 1.4]{HMT2023} to $3$--manifold groups. 
Recall that a group $G$ is an \textit{$R$--group\/} if $x^n=y^n$ for $n\ne 0$ implies $x=y$.
In particular, an $R$--group is torsion-free.
Also, $G$ is an  \textit{$\overline{R}$--group\/} if $G$ is torsion-free and the centralizer and the normalizer of the isolator subset $I\langle x\rangle$ for 
the cyclic subgroup $\langle x \rangle$ coincide for any $x\in G$.

In our context, it is convenient to adopt an alternative definition of $\overline{R}$--group (rather than the above original one) as in Lemma~\ref{lem:Rbar} in terms of a Baumslag--Solitar equation \cite{FW1999}. 
Such an equation in a $3$--manifold group is completely understood by a strong result by Shalen \cite{Sha2001}. 
In \cite{HMT2023} (see Lemma~\ref{lem:R-gpair}),
we observe that any $R$--group does not admit a generalized torsion element of order two,
which enables us to prove: 

\begin{theorem}
\label{RRbar}
Let $M$ be a compact orientable $3$--manifold.  Let $G=\pi_1(M)$.
Then $G$ is an $\overline{R}$--group
if and only if $G$ is an $R$--group.
\end{theorem}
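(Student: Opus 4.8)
The plan is to prove the equivalence by treating the two implications quite differently. First I note that both the $R$– and the $\overline{R}$–property force $G$ to be torsion-free, so if $G$ has torsion the equivalence holds vacuously and I may assume $G$ torsion-free throughout. The implication $\overline{R}\Rightarrow R$ will then hold for \emph{every} torsion-free group, with no appeal to the $3$–manifold hypothesis, while the reverse implication $R\Rightarrow\overline{R}$ is the substantive one: it is \emph{false} in general (for instance the solvable group $BS(1,2)=\mathbb{Z}[1/2]\rtimes\mathbb{Z}$ is an $R$–group but is not an $\overline{R}$–group), so here the geometry of $M$ must enter. The three ingredients I would combine for that direction are the Baumslag--Solitar reformulation of the $\overline{R}$–property (Lemma~\ref{lem:Rbar}, after \cite{FW1999}), Shalen's understanding of such equations in $3$–manifold groups \cite{Sha2001}, and the fact from \cite{HMT2023} that an $R$–group carries no generalized torsion element of order two (Lemma~\ref{lem:R-gpair}).

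For $\overline{R}\Rightarrow R$, which is clearest from the original centralizer definition, I would suppose $G$ is $\overline{R}$ but not $R$, so that $a^{n}=b^{n}$ for some $a\neq b$ and $n\neq0$; set $w=a^{n}(=b^{n})$, nontrivial by torsion-freeness. Both $a$ and $b$ commute with $w$, so $c=a^{-1}b$ satisfies $cwc^{-1}=a^{-1}(bwb^{-1})a=a^{-1}wa=w$, i.e. $c\in C(w)$. Conjugation by any element of $C(w)$ preserves the isolator subset $I\langle w\rangle$ (if $u^{m}\in\langle w\rangle$ then $(cuc^{-1})^{m}=cu^{m}c^{-1}\in\langle w\rangle$), so $c\in N(I\langle w\rangle)$, and the hypothesis $C(I\langle w\rangle)=N(I\langle w\rangle)$ forces $c$ to centralize every element of $I\langle w\rangle$, in particular $a$. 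Hence $ab=ba$, so $(ab^{-1})^{n}=a^{n}b^{-n}=1$, and torsion-freeness gives $a=b$, a contradiction.

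For $R\Rightarrow\overline{R}$ I would show that $N(I\langle x\rangle)\subseteq C(I\langle x\rangle)$ for each $x\neq1$, which via Lemma~\ref{lem:Rbar} is what is needed. Given $g\in N(I\langle x\rangle)$, the element $gxg^{-1}$ again lies in $I\langle x\rangle$, so some power falls into $\langle x\rangle$: there are $k\ge1$ and $l\neq0$ with $gx^{k}g^{-1}=x^{l}$. By Shalen's theorem \cite{Sha2001}, such a relation in a compact orientable $3$–manifold group, with $x$ of infinite order, forces $l=\pm k$. The case $l=-k$ would make $x^{k}$ conjugate to its inverse, hence a generalized torsion element of order two, which is impossible in an $R$–group by Lemma~\ref{lem:R-gpair}; so $l=k$ and $(gxg^{-1})^{k}=x^{k}$. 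Uniqueness of $k$–th roots in an $R$–group then upgrades this to $gxg^{-1}=x$. Finally, for an arbitrary $h\in I\langle x\rangle$ choose $a\ge1$ with $h^{a}=x^{b}\in\langle x\rangle$; since $g$ commutes with $x$ it commutes with $h^{a}$, and the same root-uniqueness gives $ghg^{-1}=h$. Thus $g$ centralizes all of $I\langle x\rangle$, i.e. $g\in C(I\langle x\rangle)$, as desired.

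The crux of the whole argument is the single step $l=\pm k$, for which I lean entirely on Shalen's structural result. The delicate point — and the place I expect to spend the most care — is verifying that this applies uniformly across the torus decomposition of $M$ (Seifert fibered pieces, hyperbolic pieces, and the gluing graph), so that no genuine Baumslag--Solitar relation with $|k|\neq|l|$ can arise; this is exactly where the $3$–manifold hypothesis is indispensable, as $BS(1,2)$ shows it cannot be removed. Everything else is elementary uniqueness-of-roots bookkeeping together with the contradiction supplied by Lemma~\ref{lem:R-gpair}.
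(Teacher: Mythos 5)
Your proposal is correct, and its mathematical core coincides with the paper's: both hinge on exactly two inputs, Shalen's theorem (Lemma~\ref{lem:sha}) to force $l=\pm k$ in any Baumslag--Solitar relation, and Lemma~\ref{lem:R-gpair} to rule out the sign $l=-k$ by producing a generalized torsion element of order two. The difference is in the packaging. The paper routes everything through the Fay--Walls characterization (Lemma~\ref{lem:Rbar}): the implication $\overline{R}\Rightarrow R$ is simply quoted from \cite{FW1999}, and for $R\Rightarrow\overline{R}$ it then suffices to kill a single equation $x^{-1}y^mx=y^n$ with $m\ne n$, which takes four lines. You instead work with the original isolator definition in both directions: you give an elementary self-contained proof of $\overline{R}\Rightarrow R$, and for the converse you verify $N(I\langle x\rangle)\subseteq C(I\langle x\rangle)$ directly, which costs extra unique-root bookkeeping (upgrading $gx^kg^{-1}=x^k$ to $gxg^{-1}=x$, and then to $ghg^{-1}=h$ for all $h\in I\langle x\rangle$). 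In effect you re-prove the direction of Lemma~\ref{lem:Rbar} that the paper uses, so your argument does not depend on \cite{FW1999} at all; that self-containedness is what the longer route buys. Two small inaccuracies in your write-up, neither fatal: first, your remark that showing $N\subseteq C$ is ``via Lemma~\ref{lem:Rbar} what is needed'' is backwards --- that inclusion (together with torsion-freeness and the automatic inclusion $C\subseteq N$) \emph{is} the original definition of an $\overline{R}$--group, and Lemma~\ref{lem:Rbar} plays no role in your proof. Second, the ``delicate point'' you anticipate --- checking Shalen's result piece by piece across the torus decomposition --- does not exist: Lemma~\ref{lem:sha} is stated for the fundamental group of an arbitrary connected orientable $3$--manifold and is applied as a black box, in the paper and in your argument alike, so no case analysis over Seifert and hyperbolic pieces is required.
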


A characterization of Haken $3$--manifold groups which are $R$--groups is given by
Johannson \cite[Proposition 32.4]{J1979}, 
where several families, such as Seifert fiber spaces over the sphere with three exceptional fibers, torus bundles over the circle,
are excluded from the consideration. 
Although its generalization to $3$--manifold groups may be known for experts, 
for completeness we establish the following characterization of $3$--manifold groups which are $R$, and hence $\overline{R}$--groups (Theorem~\ref{RRbar}).

We recall that the fundamental group of a compact orientable $3$--manifold $M$ is $R$--group if and only
if each prime factor $M_i$ of $M$ has such fundamental group; see \cite[Proposition~2.8]{FW1999}.  
So we may assume that $M$ is prime.

Since $\pi_1(M)$ has a torsion element only if 
$M$ is either non-prime or spherical, i.e., $\pi_1(M)$ is finite (see \cite[Lemma 9.4]{Hem1976}). 
Thus, if $\pi_1(M)$ has a torsion element, then $M$ is spherical. 
%Furthermore, if $\pi_1(M)$ has a torsion element, 
%then $M\ne S^2\times S^1$, so $M$ is irreducible. 
%Then the irreducibility implies that $M$ is spherical, equivalently $|\pi_1(M)| < \infty$. 
Obviously $\pi_1(M)$ is never an $R$--group except when $M = S^3$. 
These observation leads us to restrict our attention to compact orientable prime $3$--manifolds with $|\pi_1(M)| = \infty$.

\begin{theorem}
\label{R-group_characterization}
Let $M$ be a compact orientable,  prime $3$--manifold with $G = \pi_1(M)$ infinite. 
Suppose that $M$ is neither  $S^2\times S^1$ nor a solid torus. 
Then $G$ is an $R$--group if and only if $M$ does not have a Seifert fibered decomposing piece $X$ which has either \textup{(}i\textup{)} non-orientable base surface or 
 \textup{(}ii\textup{)} an exceptional fiber.
\end{theorem}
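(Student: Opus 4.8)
The plan is to reduce the characterization to the combination of Theorem~\ref{g-torsion_order2}, Theorem~\ref{classification}, and the observation (Lemma~\ref{lem:R-gpair}) that an $R$--group admits no generalized torsion element of order two. Since $G=\pi_1(M)$ is infinite and $M$ is prime, orientable, and not $S^2\times S^1$, the manifold is irreducible and aspherical, so $G$ is torsion-free. The strategy is therefore to show that, under these hypotheses, $G$ is an $R$--group if and only if $G$ has no generalized torsion element of order two, and then to identify the latter condition geometrically with the absence of a Seifert fibered decomposing piece having a non-orientable base surface or an exceptional fiber.

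For the forward direction I would argue contrapositively. Suppose $M$ has a Seifert fibered decomposing piece $X$ with either a non-orientable base surface or an exceptional fiber. I would like to exhibit a generalized torsion element of order two in $\pi_1(X)\subset G$, which by Lemma~\ref{lem:R-gpair} prevents $G$ from being an $R$--group. The natural source is the explicit constructions underlying Theorem~\ref{classification}: if $B_X$ is non-orientable, an orientation-reversing loop $c$ conjugates a regular fiber $t_0$ to its inverse, giving a generalized torsion pair $(t_0,c)$ as in case (iii); if $X$ has an exceptional fiber $t_0$ of even index, an $x_0$--folded loop $g$ around $p_X(t_0)$ is inverted by a power of $t_0$, yielding case (i). The delicate case is an exceptional fiber of \emph{odd} index, where neither (i) nor (ii) applies directly, so I must check separately that such a fiber still forces a generalized torsion element; here I expect to use that $\pi_1(X)$ contains a Klein-bottle-like or folded-loop configuration, or else that the fiber being exceptional at all (together with infiniteness of $G$ and the requirement that $M$ not be a solid torus) produces the needed element. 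I would lean on the precise statements in Johannson's Proposition~32.4 to confirm this.

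For the converse direction, suppose $M$ has no such Seifert fibered decomposing piece. I want to conclude $G$ is an $R$--group, equivalently (having shown the equivalence above) that $G$ has no generalized torsion element of order two. Here Theorem~\ref{g-torsion_order2} does the heavy lifting: if $G$ \emph{did} contain a generalized torsion element of order two, that theorem produces a Seifert fibered decomposing piece $X$ with an exceptional fiber of even index or a non-orientable base surface --- precisely the configuration we have excluded. This gives the contradiction and hence the converse. The one gap to close is that Theorem~\ref{g-torsion_order2} only produces an exceptional fiber of \emph{even} index or non-orientability, whereas the statement to be proved excludes \emph{any} exceptional fiber; so the absence of even-index exceptional fibers and non-orientable bases suffices for the converse, but to make the two directions align I must reconcile this with the forward direction's treatment of odd-index fibers.

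The main obstacle, then, is precisely the odd-index exceptional fiber. The equivalence with $R$--groups is asymmetric: Theorem~\ref{g-torsion_order2} detects only even-index exceptional fibers via generalized torsion of order two, yet the $R$--group characterization forbids all exceptional fibers. I expect to resolve this by invoking the $R$--group defining condition $x^n=y^n\Rightarrow x=y$ directly at an odd-index exceptional fiber: such a fiber yields a relation of the form $t_0^{p}=h$ with $t_0\neq h$ but equal powers (the regular fiber expressed through the exceptional one), violating the $R$--group property without necessarily producing an order-two generalized torsion element. Thus the proof must combine the generalized-torsion criterion (for even index and non-orientability) with a separate direct verification via the Baumslag--Solitar / isolator analysis of Lemma~\ref{lem:Rbar} and Shalen's result for the odd-index case. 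Marshalling these two mechanisms into a single clean biconditional, and ensuring the solid-torus and $S^2\times S^1$ exclusions exactly account for the degenerate Seifert fibrations, is where the real care is needed.
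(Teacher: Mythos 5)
Your proposal has a genuine gap in the converse direction. You deduce from Theorem~\ref{g-torsion_order2} that $G$ has no generalized torsion element of order two, and then conclude that $G$ is an $R$--group, appealing to an equivalence you say was established earlier; but that equivalence is never proved, and it is false. Lemma~\ref{lem:R-gpair} gives only one implication ($R$--group $\Rightarrow$ no generalized torsion element of order two). For a concrete counterexample to the reverse implication, take $X$ Seifert fibered over the disk with two exceptional fibers of index $3$, so that $\pi_1(X)\cong\langle x,y\mid x^3=y^3\rangle$: by Theorem~\ref{classification} this group has no generalized torsion element of order two (orientable base, no even-index fiber), yet $x\neq y$ and $x^3=y^3$, so it is not an $R$--group. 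Hence absence of generalized torsion of order two can never, by itself, yield the $R$--property, and your converse collapses. The paper's converse uses entirely different machinery: distinct $n$-th roots give the element $z=x^n$ non-trivial root structure; one then checks that $M$ contains no embedded Klein bottle (otherwise one finds a forbidden Seifert piece or one of the excluded manifolds $S^2\times S^1$, $\mathbb{R}P^3\#\mathbb{R}P^3$, a prism manifold), and applies Jaco--Shalen's root structure theorem (Theorem VI.3.1 of their memoir) to produce a Seifert piece with an exceptional fiber, a contradiction. The non-Haken case is handled separately via Geometrization: circle bundles over orientable surfaces are bi-orderable by Boyer--Rolfsen--Wiest, hence $R$--groups by Lemma~\ref{lem:BO-R}, and closed hyperbolic manifolds have cyclic centralizers, hence unique roots. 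None of this appears in your proposal.

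Your forward direction also fails at odd-index fibers, and the fix you sketch does not work: a relation $t_0^{p}=h^{\alpha}$ is not of the form $x^n=y^n$ with $x\neq y$, so it does not violate the $R$--property, and Lemma~\ref{lem:Rbar} together with Shalen's result concerns the $\overline{R}$--condition, not the $R$--condition. The paper's mechanism is uniform in the index and avoids generalized torsion entirely for exceptional fibers: since $c_1^{p_1}=h^{\alpha_1}$ is central, for any $y$ with $[c_1,y]\neq 1$ the elements $c_1$ and $y^{-1}c_1y$ are \emph{distinct} $p_1$-th roots of the same element; such a $y$ exists precisely because the degenerate cases (lens spaces, the solid torus) are excluded by the hypotheses on $M$. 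Adopting that argument handles even and odd indices at once; only your non-orientable-base case (vertical Klein bottle plus Lemma~\ref{lem:R-gpair}) matches the paper's proof.
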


\begin{remark}
In the statement of Theorem \ref{R-group_characterization}, we exclude $S^2\times S^1$ and a solid torus.
Their fundamental groups are $\mathbb{Z}$, which is an $R$--group.
However, they admit Seifert fibrations with exceptional fibers.
\end{remark}

%%%%%%%%%%%%%%%%
\section{Geometry of generalized torsion elements of order two}
\label{sec:g-torsion}

One of our interest is understanding geometric meaning of generalized torsion elements in $3$--manifold groups.  
Restrict our attention to such elements of order two, we may develop this viewpoint. 

Let $F$ be a closed surface, which is  possibly non-orientable. 
We say that a smooth map $f \colon F \to M$ is \textit{$\pi_1$-injective\/} if the induced homomorphism on the fundamental groups is a monomorphism. 
In the following, we abuse notation by referring to the image $f(F) \subset M$ as $F$.

The next lemma claims that if the $3$--manifold group admits a generalized torsion element of order two, 
then there exists a $\pi_1$-injective map from the Klein bottle into the $3$--manifold. 
(It is a generalization of \cite[Lemma 4]{HMT2023}.)
This is the starting point which gives a geometric interpretation of generalized torsion element of order two.

\begin{lemma}
\label{Kbmap}
Let $M$ be a $3$--manifold with $G=\pi_1(M)$ torsion-free.
If $G$ admits a generalized torsion element $g$ of order two,
then there exists a $\pi_1$-injective map $f \colon F\to M$ from the Klein bottle $F$ 
with $\pi_1(F) =  \langle a,b \mid a^{-1}ba=b^{-1}\rangle$ such that 
$f(b) = g$. 
\end{lemma}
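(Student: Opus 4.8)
The plan is to read off the Klein bottle relation directly from the generalized torsion pair, realize the resulting homomorphism by a map, and then verify injectivity by a short group-theoretic computation. Since $g$ is a generalized torsion element of order two, I first pass to a generalized torsion pair $(g,c)$, so that $g^c g = 1$, equivalently $c^{-1}gc = g^{-1}$. This is exactly the defining relation of $\pi_1(F) = \langle a,b \mid a^{-1}ba = b^{-1}\rangle$ under the assignment $b \mapsto g$, $a \mapsto c$: the relator $a^{-1}bab$ maps to $c^{-1}gcg$, which is trivial. Hence $a \mapsto c$, $b \mapsto g$ defines a homomorphism $\phi \colon \pi_1(F) \to G$ with $\phi(b) = g$.

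Next I would realize $\phi$ by a continuous map. Model $F$ by the standard CW complex with one $0$--cell, two $1$--cells (for $a$ and $b$), and one $2$--cell attached along $a^{-1}bab$. Send the $0$--cell to the basepoint of $M$ and the two $1$--cells to loops representing $c$ and $g$; this gives a map on the $1$--skeleton inducing $a \mapsto c$, $b \mapsto g$ on free fundamental groups. The attaching circle of the $2$--cell is then carried to a loop representing $c^{-1}gcg = g^{-1}g = 1$, which is null--homotopic in $M$, so $f$ extends over the $2$--cell. As $F$ is $2$--dimensional, this produces $f \colon F \to M$ with $f_* = \phi$ and $f(b) = g$. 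Note that this step needs no asphericity of $M$, only that the relator word dies in $G$.

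The heart of the proof is injectivity of $\phi$, and this is where the torsion-free hypothesis is used. Conjugation by $c$ is an automorphism of $G$ carrying $\langle g\rangle$ to $\langle c^{-1}gc\rangle = \langle g^{-1}\rangle = \langle g\rangle$, so $\langle g\rangle$ is normal in $H = \langle c,g\rangle$ and the quotient $H/\langle g\rangle$ is generated by the image $\bar c$ of $c$. I would then show $\bar c$ has infinite order: if $c^m = g^k$ with $m \neq 0$, then conjugating by $c$ gives $g^{-k} = c^{-1}g^k c = c^{-1}c^m c = c^m = g^k$, so $g^{2k}=1$, whence $k = 0$ (torsion-free) and then $c^m = 1$, forcing $m = 0$, a contradiction. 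Thus $H/\langle g\rangle \cong \mathbb{Z}$. Using the normal form $a^m b^n$ for elements of $\pi_1(F)$, any kernel element gives $c^m g^n = 1$ in $H$; projecting to $H/\langle g\rangle$ yields $\bar c^{\,m} = 1$, so $m = 0$, and then $g^n = 1$ with $g \neq 1$ torsion-free forces $n = 0$. Hence $\phi$, and so $f$, is $\pi_1$--injective.

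The main obstacle is precisely this injectivity: the existence of $f$ is essentially formal, so the real content is ruling out a collapse of $\langle c,g\rangle$ onto a proper quotient of the Klein bottle group. The torsion-free assumption is exactly what blocks such a collapse, through the power computation above; without it one could only conclude that $f_*$ has image some (possibly finite) quotient of $\pi_1(F)$.
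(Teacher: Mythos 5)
Your proposal is correct and takes essentially the same route as the paper: reduce to a pair with $c^{-1}gc=g^{-1}$, build $f$ by extending over the $2$--cell of the Klein bottle, and prove $\pi_1$--injectivity via the normal form $a^m b^n$ together with the conjugation trick that produces $g^{2k}=1$, contradicting torsion-freeness (your packaging through $H/\langle g\rangle\cong\mathbb{Z}$ is just a reorganization of the paper's computation). The only unstated detail is that $c\ne 1$, which you need when concluding that $c^m=1$ forces $m=0$; this is immediate since $c=1$ would give $g=g^{-1}$, i.e.\ $g^2=1$, and the paper records this observation explicitly at the outset.
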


\begin{proof}
Let $g$ be a generalized torsion element of order two in $G$.
Then there exists $c$ such that  $c^{-1}gc = g^{-1}$. 
Since $g \ne 1$ and $G$ is torsion-free, 
we have $c\ne 1$.
We use the same symbols $g$ and $c$ to denote the loops with the base point $p_0$.

For the Klein bottle $F$, take two loops $a$ and $b$ meeting in a single point $q_0$ so that
$a$ is orientation-reversing but $b$ orientation-preserving.
Then they give a presentation $\pi_1(F)=\langle a,b\mid a^{-1}ba=b^{-1}\rangle$ based on the point $q_0$.
Let $f$ be a map sending $q_0$, $a$ and $b$ to  $p_0$, $c$ and $g$, respectively.
Since the image  $c^{-1}g c g$ of the loop $a^{-1}bab$ is null-homotopic in $M$,
$f$ extends to a map on $F$.

We claim that
the induced homomorphism $f_*\colon \pi_1(F)\to G$ is injective.

Since we may deduce $b^{\varepsilon} a^{\delta} = a^{\delta} b^{-\varepsilon}$ ($\varepsilon, \delta = \pm 1$) 
from the relation $a^{-1} b a = b^{-1}$, 
any element of $\pi_1(F)$ is written as $a^ib^j$ for some integers $i$ and $j$.
Assume that $f_*$ is not injective.
Then there exists a non-trivial element $a^ib^j$ such that $f_*(a^ib^j)=c^i g^j=1$.
Since $g$ and $c$ are not torsions, $i\ne 0$ and $j\ne 0$.

On the other hand, the relation $c^{-1} g c= g^{-1}$ gives
$c^{-1} g^j c= g^{-j}$.
Since $g^j=c^{-i}$, we have 
$g^{-j} = c^{-1} g^j c = c^{-1} c^{-i} c = c^{-i} = g^j$, 
so $g^{2j}=1$.
This is impossible.
\end{proof}

For a Haken $3$--manifold  with incompressible boundary,
there exists the characteristic submanifold  $V$ by Jaco--Shalen \cite{JS1979} and Johannson \cite{J1979}.
Then $V$ is a disjoint union of Seifert fiber spaces\footnote{
There is a slight difference between two theories of \cite{JS1979} and \cite{J1979}.
For a hyperbolic $3$--manifold $M$ with non-empty toroidal boundary, 
$V$ is empty in \cite{JS1979}, but  $V=\partial M \times I$ in \cite{J1979}.
We adopt the latter.
}.

\begin{lemma}
\label{torsion2}
Let $M$ be an irreducible $3$--manifold and 
$f \colon F \to M$ a $\pi_1$-injective map from the Klein bottle $F$ 
such that $f(b) = g$ and $f_* \colon \pi_1(F) \to G=\pi_1(M)$ is injective given
 as in Lemma~\ref{Kbmap}.
Then there exists a Seifert fibered decomposing piece $X$ such that $f$ may be homotoped to a $\pi_1$-injective map $h$ with $h(F) \subset X$. 
\end{lemma}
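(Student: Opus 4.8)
The plan is to invoke the enclosing property of the characteristic submanifold $V$ of $M$ for essential singular surfaces of non-negative Euler characteristic. Since $f_*$ is injective and $\pi_1(F)$ is non-cyclic, the map $f$ is essential (incompressible and non-degenerate), so $f(F)$ genuinely detects the Seifert-fibered part of $M$. Recall from the characteristic submanifold theory of Jaco--Shalen \cite{JS1979} and Johannson \cite{J1979} that $V$ is a disjoint union of Seifert fiber spaces and that every essential singular torus or Klein bottle can be freely homotoped into $V$. The strategy is therefore to certify that $f$ is such an essential singular surface, push it into $V$ by the enclosing theorem, and observe that connectedness forces the image into a single Seifert-fibered component, which is by definition a Seifert fibered decomposing piece of $M$.

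To obtain a concrete handle on the Seifert-fibered behavior, I would first extract the torus inside $F$. From the relation $a^{-1}ba=b^{-1}$ one checks that $a^{-2}ba^{2}=b$, so $a^2$ commutes with $b$ and $P=\langle a^2,b\rangle\cong\mathbb{Z}\oplus\mathbb{Z}$ is an index-two subgroup of $\pi_1(F)$; geometrically it is the orientation double cover $\pi\colon T\to F$ by a torus $T$, with deck involution $\tau$. Then $\tilde f=f\circ\pi\colon T\to M$ is an essential singular torus with $\tilde f_*(P)=\langle c^2,g\rangle\cong\mathbb{Z}\oplus\mathbb{Z}$ (here $c^{-2}gc^{2}=g$), and $\tilde f(T)=f(F)$. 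Applying the enclosing property and then descending, I would produce a map $h\colon F\to M$ homotopic to $f$ with $h(F)\subset V$; since $F$ is connected, $h(F)$ lies in one component $X$ of $V$. That $X$ is Seifert fibered is also forced independently, as the Klein bottle group contains $\mathbb{Z}\oplus\mathbb{Z}$ and hence cannot inject into the fundamental group of an atoroidal decomposing piece. If $M$ is itself Seifert fibered we simply take $X=M$ and there is nothing to prove. Finally, homotoping $f$ to $h$ leaves the induced map on $\pi_1$ unchanged, so $h_*\colon\pi_1(F)\to\pi_1(M)$ is still injective, and as $h$ factors through $\pi_1(X)$ the composite $\pi_1(F)\to\pi_1(X)$ is injective as well; thus $h$ is the desired $\pi_1$-injective map into $X$.

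The hard part will be the non-orientability of $F$. The classical enclosing theorem is cleanest for the torus, so I must either invoke its extension to singular Klein bottles or carry out the enclosing homotopy of $\tilde f$ \emph{$\tau$--equivariantly}, so that it descends to a homotopy of $f=\tilde f/\tau$ rather than merely homotoping the torus double cover; one also has to check that the equivariant homotopy keeps the image inside a single $\tau$--invariant Seifert piece. A secondary technical point, when $\partial M\neq\varnothing$, is to prevent the homotopy from pushing $f(F)$ out through $\partial M$, which is handled by the relative (rel $\partial$) form of the enclosing theorem. Once the equivariance is secured, the remaining steps are the routine bookkeeping described above.
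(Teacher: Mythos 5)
Your proposal correctly identifies the general framework (the enclosing property of the characteristic submanifold), but it has two genuine gaps. The central one is exactly the point you defer to the end as ``the hard part'': the enclosing theorem you want to quote is stated for essential singular tori, and passing through the orientation double cover $\pi\colon T\to F$ does not solve the problem, because the homotopy of $\tilde f=f\circ\pi$ produced by the enclosing theorem is just \emph{some} homotopy of a singular torus --- nothing in Jaco--Shalen makes it $\tau$--equivariant, and only a $\tau$--equivariant homotopy descends to a homotopy of $f$ itself. Saying you will ``carry out the enclosing homotopy $\tau$--equivariantly'' names the obstruction rather than removing it, so the argument as written proves only that the \emph{double cover} of your Klein bottle can be pushed into $V$, not that $f$ can. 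The paper circumvents this entirely with a different thickening trick: let $N$ be the twisted $[-1,1]$--bundle over $F$ with retraction $r\colon N\to F$; then $N$ is an \emph{orientable Seifert fiber space}, $r_*$ is an isomorphism, so $f\circ r\colon N\to M$ is non-degenerate, and one applies \cite[III.5.5.~Proposition]{JS1979} --- which concerns maps of Seifert fiber spaces, not singular surfaces --- to homotope $f\circ r$ to a map $f'$ with $f'(N)\subset X$ for a Seifert fibered decomposing piece $X$; restricting $f'$ to the zero section $F$ gives the desired $h$. No equivariance is ever needed.

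The second gap is that the characteristic submanifold $V$ exists only when $M$ is Haken (sufficiently large), and your argument never splits into cases. If $M$ is closed and non-Haken, there is no enclosing theorem to invoke; your parenthetical remarks (``if $M$ is Seifert fibered take $X=M$''; ``$\mathbb{Z}\oplus\mathbb{Z}$ cannot inject into an atoroidal piece'') contain the right facts but cannot be reached by your method, since you first need to know what $M$ is. The paper handles this case separately: a non-sufficiently-large irreducible $M$ with infinite fundamental group is closed, hence by Geometrization it is Seifert fibered (done, with $X=M$) or hyperbolic, and the hyperbolic case is excluded because $f_*(\pi_1(F))\supset\mathbb{Z}\oplus\mathbb{Z}$ would force a parabolic $\mathbb{Z}\oplus\mathbb{Z}$ and hence a torus cusp in a closed manifold. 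You should restructure your proof into these two cases and, in the Haken case, replace the equivariant-homotopy plan by the twisted $I$--bundle argument (or some other device that genuinely handles the non-orientable surface).
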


\begin{proof}
Assume first that $M$ is sufficiently large. 
Let $N$ be a twisted $[-1, 1]$--bundle over the Klein bottle $F$ in which $F$ is the zero section. 
Denote the natural retraction $N \to F$ by $r$. 
Note that $r$ induces an isomorphism $r_* \colon \pi_1(N) \to \pi_1(F)$. 
Then $f \circ r \colon N \to M$ induces a monomorphism $(f \circ r)_* \colon \pi_1(N) \to G$, and thus $f \circ r$ is non-degenerate. 
Then \cite[III.5.5. Proposition]{JS1979} asserts that there exists a Seifert fibered decomposing piece $X \subset M$, 
and $f \circ r$ is homotopic to a $\pi_1$-injective map $f' \colon N \to M$ such that $f'(N) \subset X$. 
This then means that $f \colon F \to M$ is homotoped to $h = f'|_{F} \colon F \to M$ such that $h(F) \subset X$. 

Suppose that $M$ is not sufficiently large. 
Then $M$ is closed \cite{Hem1976,Ja1980}, 
and since $M$ is irreducible and $G$ is infinite, the Geometrization implies that $M$ is either a Seifert fiber space or a hyperbolic $3$--manifold.  
In the former case, we have a desired result, so
we assume $M$ is a closed hyperbolic $3$--manifold. 
Note that $F$ is double covered by a torus, and $\pi_1(F)$ has a subgroup isomorphic to $\Gamma = \mathbb{Z} \oplus \mathbb{Z}$. 
Then since  $f_* \colon \pi_1(F) \to G$ is injective, $G$ has a subgroup $f_*(\Gamma) \cong \mathbb{Z} \oplus \mathbb{Z}$ whose holonomy image in 
$\mathrm{Isom}^+(\mathbb{H}^3)$ generated by parabolic elements (see \cite[Theorem 1.7.2]{AFW2015} or \cite[Corollary 4.6]{Sc1983}). 
Hence $M$ should have torus cusp, which contradicts $M$ being closed. 
\end{proof}

\begin{remark}
We may assume that the $\pi_1$-injective map $h \colon F \to X$ is smooth. 
Furthermore, 
$h \colon F \to X$ may be homotoped to an immersion. 
See \cite[Lemma~1.2]{HH1985}. 
For notational simplicity we denote this immersion by the same symbol $h \colon F \to X$. 
\end{remark}

We apply a result  due to Hass \cite{Hass1984} to the Klein bottle to obtain: 

\begin{proposition}
\label{verical_horizontal}
Let $X$ be a Seifert fiber space with possibly non-empty boundary.\footnote{We appreciate Joel Hass for telling us his result \cite{Hass1984} 
still holds for Seifert fiber spaces with non-empty boundary.}
Any $\pi_1$-injective immersion $h \colon F \to X$ from the Klein bottle $F$ may be homotoped to a vertical or horizontal immersion
$\varphi \colon F \to X$. 
\end{proposition}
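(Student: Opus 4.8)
The plan is to reduce the statement to the classical theorem of Hass \cite{Hass1984}, which asserts that a $\pi_1$-injective immersion of an incompressible surface into a Seifert fiber space can be homotoped so as to be either vertical (a union of fibers, i.e.\ saturated with respect to the Seifert fibration) or horizontal (everywhere transverse to the fibers). The essential point is that this dichotomy, originally established for closed Seifert fiber spaces, persists for Seifert fiber spaces with possibly non-empty boundary; this is exactly the extension that the footnote attributes to a personal communication from Hass. Thus the bulk of the argument is to verify that the hypotheses of Hass's theorem apply to our immersed Klein bottle $h \colon F \to X$ and then simply to invoke it.

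First I would record that $F$ is the Klein bottle and $h \colon F \to X$ is a $\pi_1$-injective immersion, so that $h_* \colon \pi_1(F) \to \pi_1(X)$ is a monomorphism and hence $h(F)$ is an incompressible surface in $X$ (a two-sided incompressible surface if one prefers to pass to the orientation double cover, but the Klein bottle being one-sided is harmless here since we work with the immersion rather than an embedded surface). Second, I would confirm that the Seifert fiber space $X$ produced by Lemma~\ref{torsion2} is exactly of the type to which Hass's result applies: $X$ is a decomposing piece of the torus decomposition of the irreducible $3$--manifold $M$, hence is an irreducible Seifert fiber space, possibly with toroidal boundary. Because $\pi_1(F) \cong \langle a,b \mid a^{-1}ba = b^{-1}\rangle$ contains the subgroup $\mathbb{Z}\oplus\mathbb{Z}$ of finite index, the immersed surface cannot be compressible, and the $\pi_1$-injectivity rules out $h$ being homotopic into a boundary torus in a degenerate way.

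The final step is to apply Hass's theorem directly: any $\pi_1$-injective (equivalently, incompressible) immersion of a surface into $X$ is homotopic to one in \emph{general position} with respect to the Seifert fibration, and Hass's argument shows the resulting map may be taken vertical or horizontal. I would state the conclusion as the existence of a homotopy from $h$ to an immersion $\varphi \colon F \to X$ that is either vertical or horizontal, which is precisely the assertion of the proposition.

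The main obstacle I anticipate is not the core geometric dichotomy---which is black-boxed from Hass---but rather justifying its validity in the \emph{bounded} setting. Hass's original statement is for closed Seifert fiber spaces, and one must be confident that the transversality/least-area arguments go through when $\partial X \ne \varnothing$, controlling the behavior of the homotopy near $\partial X$ so that the surface is not pushed out of $X$ and the vertical/horizontal alternative is preserved. The paper handles this delicate point by appealing to Hass's personal confirmation (the footnote), so in the write-up I would lean on that citation rather than reproving the transversality machinery; the remaining routine verifications that $h(F)$ is incompressible and that $X$ is an honest Seifert fiber space are straightforward consequences of the preceding lemmas.
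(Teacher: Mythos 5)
Your proposal is correct and takes essentially the same approach as the paper: the paper offers no independent argument for this proposition, but simply invokes Hass's theorem \cite{Hass1984} together with the personal communication recorded in the footnote to cover the case $\partial X \ne \varnothing$, exactly as you do. Your additional verifications (that $\pi_1$-injectivity supplies the incompressibility hypothesis and that the decomposing piece $X$ is an honest Seifert fiber space) are routine and consistent with how the paper sets up the application in Lemma~\ref{torsion2}.
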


Now let us investigate a vertical or horizontal immersion $\varphi \colon F \to X$ in details. 
In the following we assume that $X$ has exceptional fibers $t_1, \dots, t_n$ of indices $\alpha_1, \dots, \alpha_n$. 
(When it has no exceptional fibers, $n$ is understood to be $0$.)

\begin{lemma}
\label{vertical}
If $\varphi \colon F \to X$ is vertical, 
then $B_X$ has a cone point of even order, or $B_X$ is non-orientable. 
\end{lemma}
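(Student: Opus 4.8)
The plan is to exploit that a vertical immersion makes $F$ into a Seifert fibered space whose fibration is pulled back from that of $X$, so that $\varphi$ respects fibers and covers a map $\bar\varphi$ of base orbifolds. I will use the classical fact that the Klein bottle admits exactly two Seifert fibrations: one as a twisted circle bundle over $S^1$, whose fiber is the class $b$; and one over the ``mirror interval'' $S^1/(\mathbb{Z}/2)$, whose regular fiber is the central class $a^2$ and which carries two exceptional fibers of index two. The induced fibration on $F$ must be one of these. Let $t$ denote the regular fiber of $X$. Since $\varphi$ carries the generic fiber of $F$ (the class $b$ or $a^2$, both of infinite order in $\pi_1(F)$) to a regular fiber of $X$, the $\pi_1$--injectivity supplied by Proposition~\ref{verical_horizontal} forces $t$ to have infinite order. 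I may therefore use the exact sequence $1\to\langle t\rangle\to\pi_1(X)\xrightarrow{p}\pi_1^{\mathrm{orb}}(B_X)\to 1$ together with the fiber--orientation rule that $x^{-1}tx=t^{\varepsilon}$ with $\varepsilon=+1$ precisely when $p(x)$ is orientation preserving in $B_X$. Set $\rho=p\circ\varphi_*$, and note that by verticality $\ker\rho=\varphi_*^{-1}(\langle t\rangle)$ is exactly the fiber subgroup of the induced fibration.

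If the induced fibration is the twisted bundle over $S^1$, then $\varphi_*(b)=t^{k}$ with $k\neq0$ (as $b$ injects and $t$ has infinite order), and applying $\varphi_*$ to the defining relation $a^{-1}ba=b^{-1}$ gives $\varphi_*(a)^{-1}t^{k}\varphi_*(a)=t^{-k}$. Since $t$ has infinite order this says $\varepsilon=-1$ for $x=\varphi_*(a)$, so $p(\varphi_*(a))$ is orientation reversing in $B_X$; hence $B_X$ is non-orientable, the second alternative.

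If instead the induced fibration is the one over the mirror interval, then the fiber subgroup is $\langle a^2\rangle$, so $\ker\rho=\langle a^2\rangle$. In particular $\rho(a^2)=1$ while $\rho(a)\neq1$, whence $\rho(a)$ has order exactly two in $\pi_1^{\mathrm{orb}}(B_X)$. Because a closed $2$--orbifold group is virtually a surface group whose only elements of finite order are the elliptics, any such element is conjugate to a power of a cone--point generator; an element of order two therefore forces some cone point to have even order, the first alternative.

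The step I expect to be the main obstacle is the passage, in the second case, from ``$\pi_1^{\mathrm{orb}}(B_X)$ contains an element of order two'' to ``$B_X$ has a cone point of even order.'' This rests on the structural description of $2$--orbifold fundamental groups, namely that all torsion is conjugate into the cyclic stabilizers $\mathbb{Z}/\alpha_j$ of the cone points---handle and crosscap generators contributing no torsion even when $B_X$ is non-orientable---so that the presence of $2$--torsion is equivalent to some index $\alpha_j$ being even. A secondary point to justify carefully is that the induced fibration on $F$ is genuinely one of the two standard Seifert fibrations of the Klein bottle, so that the generic fiber class is exactly $b$ or $a^2$; this I will deduce from verticality together with the $\pi_1$--injectivity of $\varphi$.
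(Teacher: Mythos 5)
Your route is genuinely different from the paper's --- the paper classifies the induced circle foliations on $F$ and then analyzes immersed M\"obius band neighborhoods of the orientation-reversing leaves geometrically, whereas you push everything through the exact sequence $1\to\langle t\rangle\to\pi_1(X)\to\pi_1^{\mathrm{orb}}(B_X)\to 1$ --- and your first case (the fibration with fiber $b$) is complete and correct. In the second case, however, there is a genuine gap, and it sits exactly at the heart of the lemma: the assertion that ``by verticality $\ker\rho=\varphi_*^{-1}(\langle t\rangle)$ is exactly the fiber subgroup of the induced fibration,'' i.e.\ $\langle a^2\rangle$. Verticality only gives the inclusion $\langle a^2\rangle\subseteq\ker\rho$, because generic leaves map to regular fibers. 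The reverse inclusion --- equivalently, the single fact $\rho(a)\neq 1$ that your order-two argument needs --- does \emph{not} follow from verticality: the leaf $a$ maps onto some fiber $t_0$ of $X$ with some covering degree $m$, and if $t_0$ were regular (or exceptional of index dividing $m$) one would have $\varphi_*(a)\in\langle t\rangle$ and $\rho(a)=1$. A vertical, $\pi_1$-injective map of a M\"obius band can indeed send its core onto a regular fiber (locally $(s,u)\mapsto(s^2,u)$ in a trivially fibered solid torus), so something beyond verticality must be invoked. Ruling this configuration out is precisely the content of the lemma; note that you flagged the orbifold-torsion fact as the main obstacle (that part is standard, and valid here since $\pi_1(X)$ contains a Klein bottle group, so $B_X$ is a good orbifold), while the real crux is the step you passed over.

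The gap is fillable in two ways. The paper's way is geometric: because $\varphi$ is an \emph{immersion}, the differential condition along the core leaf $a$ forces the local holonomy of the image fiber to be rotation by $\pi$, so $t_0$ must be exceptional of even index; this is what the M\"obius-band analysis in the paper's proof accomplishes. An algebraic repair consistent with your setup runs as follows: suppose $\rho(a)=1$, so $\varphi_*(a)=t^{l}$ for some $l$. Since $\langle t\rangle$ is normal and infinite cyclic, $\varphi_*(b)^{-1}t\,\varphi_*(b)=t^{\varepsilon}$ with $\varepsilon=\pm1$. Applying $\varphi_*$ to the relation $a^{-1}ba=b^{-1}$ gives $t^{-l}\varphi_*(b)t^{l}=\varphi_*(b)^{-1}$; if $\varepsilon=+1$ this yields $\varphi_*(b^{2})=1$, and if $\varepsilon=-1$ it yields $\varphi_*(b^{2})=t^{-2l}=\varphi_*(a^{-2})$, hence $b^{2}=a^{-2}$ in $\pi_1(F)$. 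Both conclusions contradict the injectivity of $\varphi_*$, so $\rho(a)\neq1$ after all, and your deduction of an order-two element and an even-order cone point then goes through. One further loose end: you defer the fact that the induced foliation is one of the two standard ones on the Klein bottle; this is classical, but the paper actually proves it (via the four isotopy classes of essential simple closed curves on $F$), so if you cite it, cite it as such rather than as a consequence of verticality.
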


\begin{proof}
Since $\varphi \colon F \to X$ is vertical, 
we have a foliation on the Klein bottle $F$ induced from the Seifert fibration of $X$. 
Clearly, each leaf of such a foliation is a circle.

Recall that there are just four isotopy classes of non-trivial simple closed curves on the Klein bottle $F$ (\cite{Price1977,Rubin1979}). 
Presenting $\pi_1(F) = \langle a, b \mid aba^{-1} b = 1 \rangle$, 
then these are represented by $a,\ b,\ ab,\ a^2$.  
See Figure~\ref{Klein}. 
Note that $a$ and $ab$ are represented by orientation reversing curves in $F$, 
while $b$ and $a^2$ are represented by orientation preserving curves in $F$.  

These simple closed curves define a foliation $\mathcal{F}$ of $F$. 
If $a$ is a leaf of $\mathcal{F}$, then cutting along $a$, we obtain a M\"obius band whose core is a leaf $a'$, and  
orientation preserving curves isotopic to $a^2$ give the other leaves of the M\"obius band. 
This gives a foliation $\mathcal{F}_a$ in Figure~\ref{Klein} (i). 

If $b$ is a leaf, then cutting along $b$, we obtain an annulus which is foliated by simple closed curves isotopic to $b$. 
This gives a a foliation $\mathcal{F}_b$ in Figure~\ref{Klein} (ii). 

If $ab$ is a leaf, then we apply a cut and paste operation.
We obtain a foliated Klein bottle which is essentially the same as $\mathcal{F}_a$; 
see Figure~\ref{Klein}(iii). 

Thus we have two distinct foliations $\mathcal{F}_a$ and $\mathcal{F}_b$ on $F$. 

\begin{figure}[h]
\begin{center}
\includegraphics*[bb=0 0 845 715, width=0.8\textwidth]{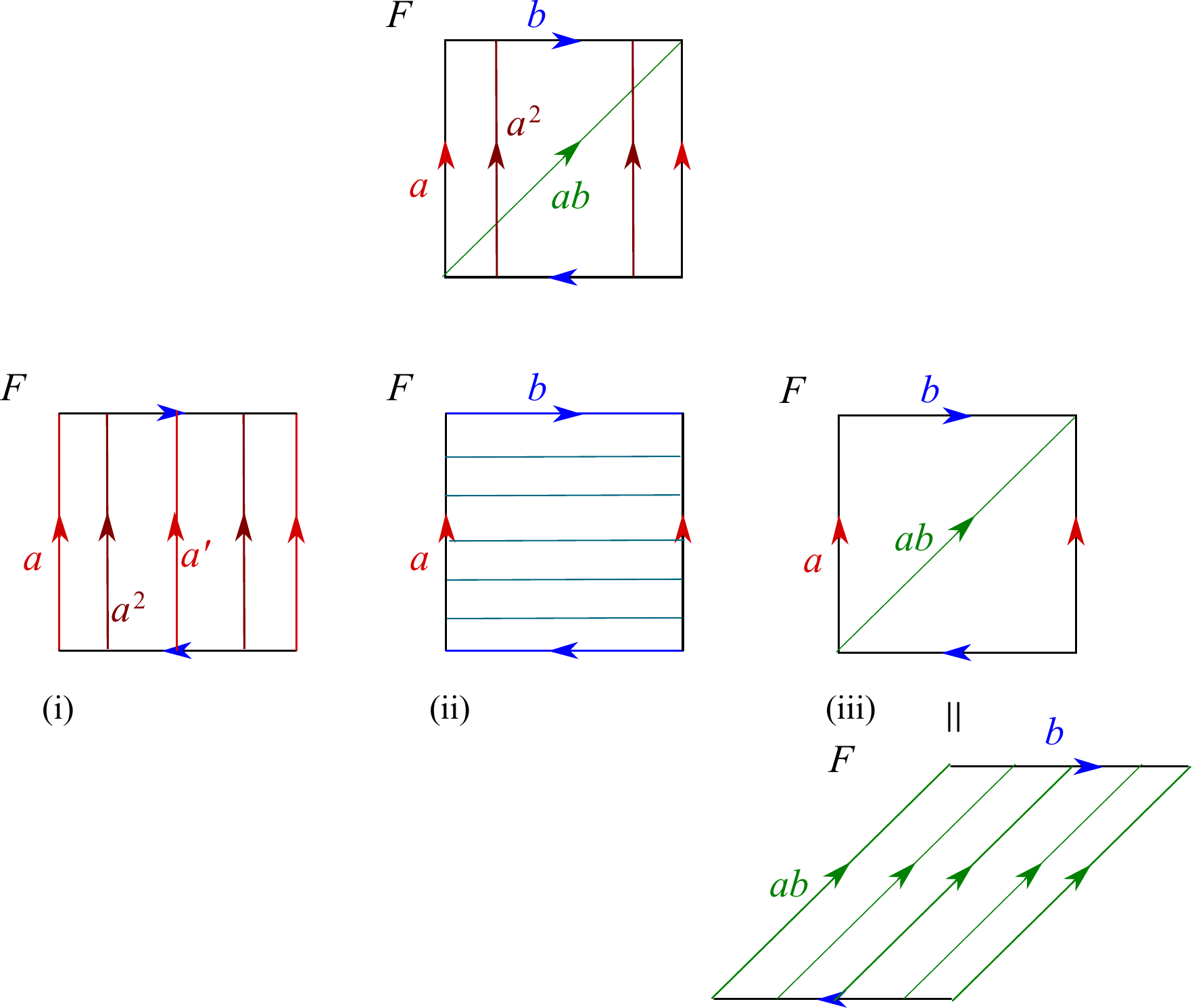}
\end{center}
\caption{Foliations on the Klein bottle induced from a Seifert fibration.}
\label{Klein}
\end{figure}

To prove the lemma, 
it is sufficient to show that if $B_X$ is orientable, 
then it has a cone point of even order. 

Assume that $B_X$ is orientable. 
Then an orientation reversing loop of the Klein bottle $F$ should be a leaf of this induced foliation, 
and hence the induced foliation is $\mathcal{F}_a$. 
Note that we have two non-isotopic orientation reversing loops $a$ and $a'$ on the Klein bottle $F$ (Figure~\ref{Klein}(i)).  
Consider small tubular neighborhoods $N(a)$ and $N(a')$ of $a$ and $a'$ in $F$; 
these are M\"obius bands. 
See Figures~\ref{immersed_Klein_m=1} (Upper Left) and \ref{immersed_Klein_m=3} (Upper Left). 
$\partial N(a)$ is a single loop which is also a leaf of the foliation of $F$, and thus $\varphi(\partial N(a))$ is a fiber in the Seifert fibration of $X$. 
Let us write the image $\varphi(a) = t_0$ and $\varphi(a') = t_1$, where $t_1$ may coincide with $t_0$. 
Figures~\ref{immersed_Klein_m=1} (Middle) and \ref{immersed_Klein_m=3} (Middle) describe a situation 
with $t_0 = t_1$, 
and Figures~\ref{immersed_Klein_m=1} (Right) and \ref{immersed_Klein_m=3} (Right) describe a situation 
in which $t_0 \ne t_1$. 
It should be noted here that $\varphi |_{a} \colon a \to t_0$ is not necessarily a homeomorphism. 
In general, 
it is an $m$--fold covering for some $m\ge 1$, 
and $\varphi(\partial N(a))$ wraps $2m$ times along $t_0$. 
This means that $t_0 = \varphi(a)$ is an exceptional fiber of even index $2m$. 
Similarly, $\varphi |_{a'} \colon a' \to t_1$ is an $n$--fold covering for some $n\ge 1$, 
and $\varphi(\partial N(a'))$ wraps $2n$ times along $t_1$. 
Hence 
$t_1 = \varphi(a')$ is an exceptional fiber of even index $2n$. 

In the case where $t_0 \ne t_1$, covering degrees $m$ and $n$ may be distinct, 
and so correspondingly the indices of exceptional fibers $t_0$ and $t_1$ may be distinct. 

Figure~\ref{immersed_Klein_m=1} illustrates a case where $m = 1$, 
and Figure~\ref{immersed_Klein_m=3} (Right)  illustrates a case where $m = 3, n = 2$. 

On the other hand, 
$F - \mathrm{int}(N(a) \cup N(a'))$ is an annulus $A$, 
and $\varphi(A)$ is a possibly immersed vertical annulus in $X$ such that 
$\partial \varphi(A) = \partial \varphi(N(a)) \cup \partial \varphi(N(a'))$. 

\begin{figure}[h]
\begin{center}
\includegraphics*[bb=0 0 1018  523, width=1.0\textwidth]{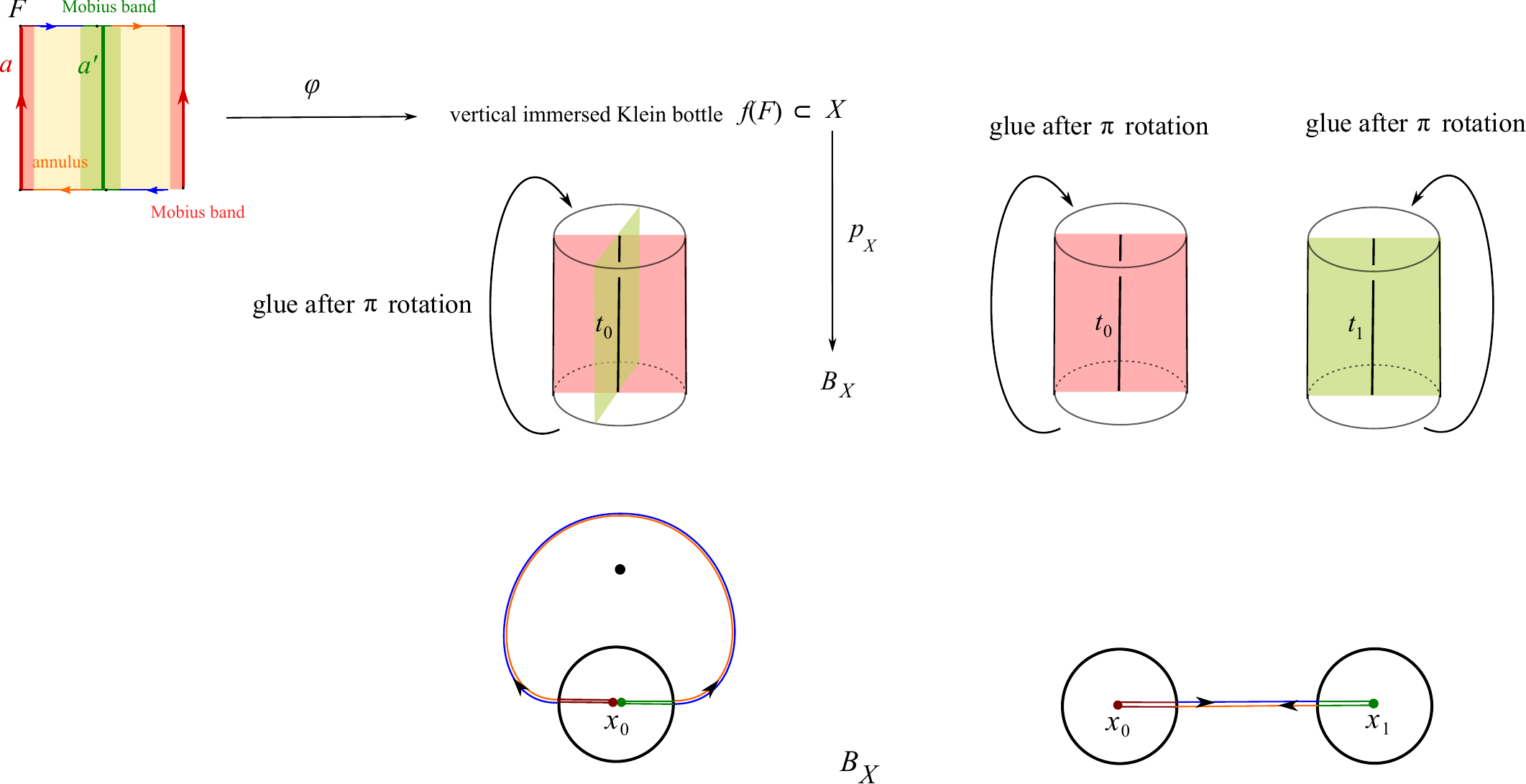}
\end{center}
\caption{Immersed Klein bottles and generalized torsion elements of order two.}
\label{immersed_Klein_m=1}
\end{figure}

\begin{figure}[h]
\begin{center}
\includegraphics*[bb=0 0 1001 522, width=1.0\textwidth]{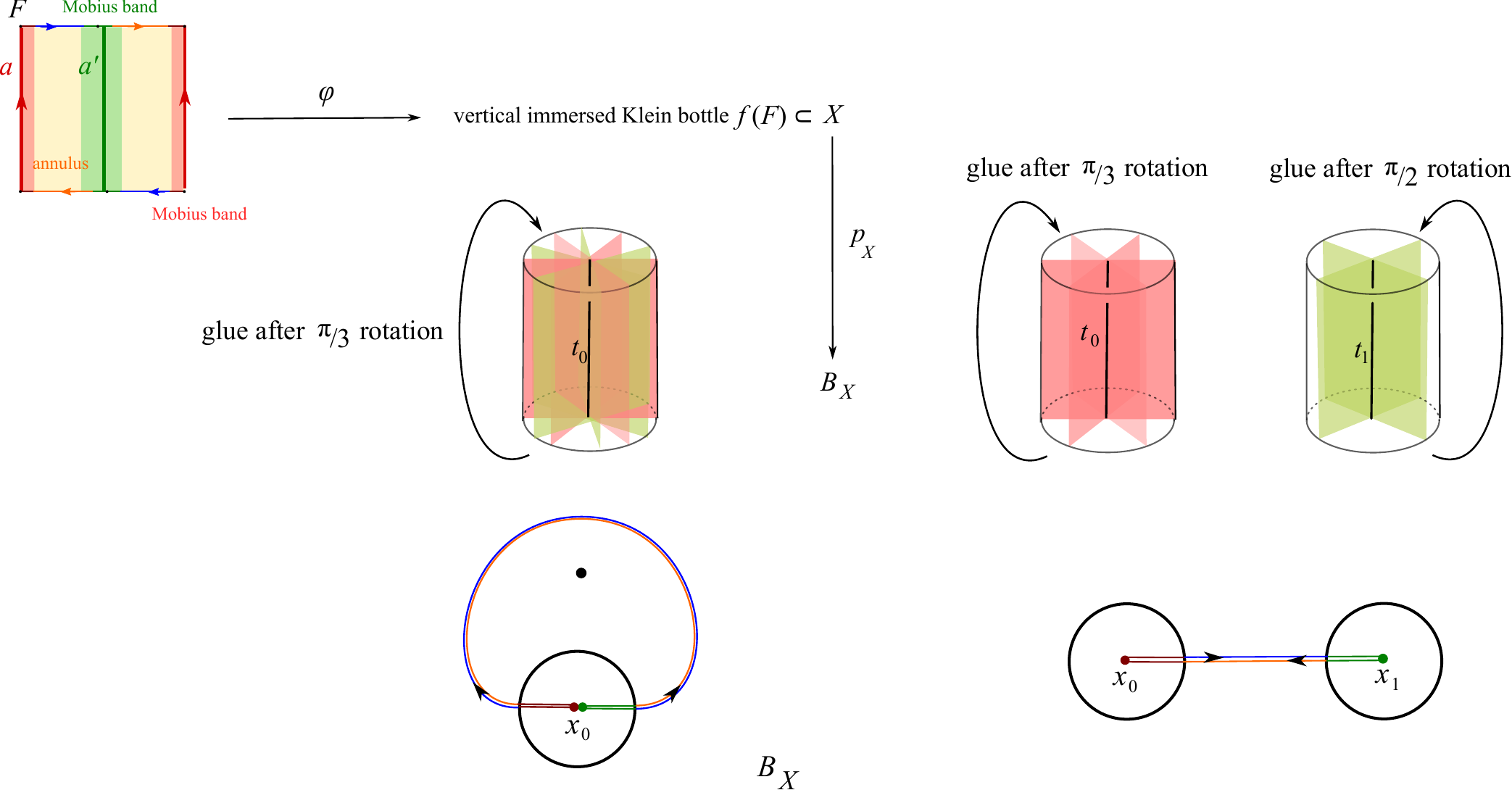}
\end{center}
\caption{Immersed Klein bottles and generalized torsion elements of order two.}
\label{immersed_Klein_m=3}
\end{figure}

Hence the Seifert fiber space $X$ has an exceptional fiber of even index, 
in other words, $B_X$ has a cone point of even order  as desired. 
\end{proof}

\begin{lemma}
\label{horizontal}
If $\varphi \colon F \to X$ is horizontal, 
then $B_X$ is either the projective plane $\mathbb{R}P^2$ with two cone points of order two, 
or the Klein bottle without cone points. 
\end{lemma}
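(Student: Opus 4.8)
The plan is to turn a horizontal immersion into an orbifold covering of the base and then run an Euler-characteristic count. Since $\varphi\colon F\to X$ is horizontal, it is everywhere transverse to the fibers, so the composition $p_X\circ\varphi\colon F\to B_X$ is a branched covering of some degree $n\ge 1$ (the number of intersections of $F$ with a regular fiber), branched exactly over the cone points of $B_X$. As $F$ is closed and a boundary component of $X$ would lift to a boundary component of $F$, the existence of a closed horizontal surface forces $B_X$ to be closed; thus I may treat $p_X\circ\varphi$ as an orbifold covering of a closed $2$-orbifold.

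First I would apply multiplicativity of the orbifold Euler characteristic under orbifold coverings,
\[
\chi(F)=n\cdot\chi^{\mathrm{orb}}(B_X),\qquad
\chi^{\mathrm{orb}}(B_X)=\chi(|B_X|)-\sum_i\Bigl(1-\tfrac{1}{\alpha_i}\Bigr),
\]
where $|B_X|$ is the underlying surface and $\alpha_1,\dots,\alpha_n$ are the cone orders. Since $F$ is the Klein bottle, $\chi(F)=0$, and because $n\ge 1$ this forces $\chi^{\mathrm{orb}}(B_X)=0$; that is, $B_X$ is a flat (Euclidean) $2$-orbifold.

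Then I would impose an orientability constraint. A covering of an orientable orbifold has orientable total space, so non-orientability of $F$ forces the underlying surface $|B_X|$ to be non-orientable. This immediately excludes the torus and every spherical base; and since the base orbifold of a Seifert fibration carries only cone-point singularities (no reflector lines), it remains to enumerate closed non-orientable $2$-orbifolds with cone points and $\chi^{\mathrm{orb}}=0$. For the Klein-bottle base, $\chi(|B_X|)=0$ gives $\sum_i(1-1/\alpha_i)=0$, hence no cone points; for the projective-plane base, $\chi(\mathbb{R}P^2)=1$ gives $\sum_i(1-1/\alpha_i)=1$, and since each cone point contributes at least $1/2$ the only solution is two cone points of order two. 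Any non-orientable surface of smaller Euler characteristic yields $\chi^{\mathrm{orb}}<0$ and is excluded. This leaves precisely the Klein bottle without cone points and $\mathbb{R}P^2$ with two cone points of order two, as claimed.

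The main obstacle is the first step: verifying that the horizontal immersion induces a genuine branched orbifold covering of $B_X$ with the correct ramification at the cone points, so that the multiplicativity formula for $\chi^{\mathrm{orb}}$ applies, and keeping track of the ramification data when $\varphi$ is merely an immersion rather than an embedding. Once this is established, the orientability and arithmetic steps are routine.
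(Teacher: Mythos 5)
Your proposal is correct and follows essentially the same route as the paper's proof: both project via $p_X\circ\varphi$ to a branched (orbifold) covering of $B_X$, deduce that $B_X$ is closed and non-orientable from the corresponding properties of the Klein bottle $F$, and then apply the Riemann--Hurwitz formula (equivalently, multiplicativity of $\chi^{\mathrm{orb}}$) with $\chi(F)=0$ to force $\chi(B_X)=\sum_i\bigl(1-\tfrac{1}{\alpha_i}\bigr)$, whose only non-orientable solutions are the Klein bottle with no cone points and $\mathbb{R}P^2$ with two cone points of order two. The step you flag as the main obstacle is exactly the standard fact about horizontal surfaces in Seifert fiber spaces that the paper also invokes without further argument.
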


\begin{proof}
Take a composition $p_X \circ \varphi \colon F \to B_X$, 
which is a branched cover. 
Since $F$ is non-orientable and $\partial F = \varnothing$, 
$B_X$ is also non-orientable and $\partial B_X = \varnothing$. 
The Riemann--Hurwitz formula implies that 
\[
0 = \chi(F) = p\left( \chi(B_X) - \sum_{i = 1}^n \left(1 - \frac{1}{\alpha_i}\right)\right),
\]
where $p$ is the covering degree and $\alpha_1,\dots,\alpha_n$ are the orders of cone points.
Then we have
\[
\chi(B_X)=\sum_{i=1}^n \left( 1-\frac{1}{\alpha_i}\right).
\]

If $n=0$, then $\chi(B_X)=0$, so $B_X$ is the Klein bottle.

If $n=1$, then the integer $\chi(B_X)$ satisfies 
\[
0<\chi(B_X)=1-\frac{1}{\alpha_1} < 1,
\]
which is impossible.

If $n=2$, then 
\[
1 \le \chi(B_X) = 2-\left( \frac{1}{\alpha_1}+\frac{1}{\alpha_2}\right) < 2,
\]
so $\chi(B_X)=1$ and $\alpha_1=\alpha_2=2$. 
Hence $B_X$ is $\mathbb{R}P^2$ with two cone points of order two. 

Finally, if $n>2$, then
\[
\chi(B_X) = n-\sum_{i=1}^n\frac{1}{\alpha_i}\ge n-\frac{n}{2}=\frac{n}{2}>1,
\]
which is impossible.
\end{proof}

%%%%%%%%%%%%%%%%%%%%%%
\section{Generalized torsion elements of order two in $3$--manifold groups}

\subsection{Classification of $3$--manifolds with generalized torsion elements of order two.}\quad

The aim of this section is to classify $3$--manifolds, each of  whose fundamental group has a generalized torsion element of order two (Theorems  \ref{thm:local} and \ref{g-torsion_order2}), 
and then to establish a classification of such generalized torsion pairs (Theorem~\ref{classification}).

We start with a proof of Theorem \ref{thm:local}. 
In the proof, we use stable commutator length. 
So we begin by recalling its definition and collecting some properties which we need in the proof.

For $g \in [G,G]$, the \emph{commutator length\/} $\mathrm{cl}_G(g)$ is the smallest number of commutators in $G$ whose product is equal to $g$. 
The \emph{stable commutator length\/} $\mathrm{scl}_{G}(g)$ of $g \in [G,G]$ is defined to be the limit
\begin{equation}
\label{def1}
\mathrm{scl}_{G}(g) = \lim_{n \to \infty} \frac{\mathrm{cl}(g^n)}{n}.
\end{equation}

Since $\mathrm{cl}_G(g^n)$ is non-negative and subadditive, Fekete's subadditivity lemma shows that 
this limit exists.  

We will extend (\ref{def1}) to the stable commutator length $\mathrm{scl}(g)$ for an element $g$ which is not necessarily in $[G, G]$ as 
\begin{equation*}
\label{def2}
\mathrm{scl}_G(g) = 
\begin{cases}
\frac{\mathrm{scl}_G(g^k)}{k} & \text{if}\, g^k \in [G,G]\, \text{for some}\, k>0, \\
\infty & \text{otherwise}.
\end{cases}
\end{equation*}
It is well known that $\mathrm{scl}_G(g)$ is independent of the choice of $k>0$ such that $g^k \in [G,G]$, for instance, see \cite{IMT_PAMS2019}.

\begin{proof}[Proof of Theorem \ref{thm:local}]
The ``if'' part is obvious, because each $\pi_1(M_i)$ is a subgroup of $\pi_1(M)$.
Hence we assume that $\pi_1(M)$ contains a generalized torsion element $g$ of order two.

First, assume that $g$ is conjugate into $\pi_1(M_i)$ for some $i$.
Hence there exists $a \in \pi_1(M)$ such that $a^{-1}ga=g_1\in \pi_1(M_i)$.
We can prove that $g_1$ is a generalized torsion element of order two as an element of $\pi_1(M_i)$
by the same procedure as in the proof of Theorem 1.5 in \cite{IMT_PAMS2019}.

Thus we can assume that $g$ is not conjugate into any factor. 
In the following we show that there exists a torsion element of order two in $\pi_1(M_i)$ for some $i$. 

Let us express $g$ uniquely as a reduced form $g_1g_2\dots g_k$ where $k\ge 2$, $g_i\in \pi_1(M_{j_i})$, $g_i\ne 1$, and $j_i\ne j_{i+1}$.
For simplicity, we assume that $j_1=1$, and express $\pi_1(M)$ as $A*B$ where $A=\pi_1(M_1)$ and $B=\pi_1(M_2)*\cdots * \pi_1(M_n)$.
Then $g=a_1b_1\dots a_Lb_L$ with $a_i\in A-\{1\}$, $b_i\in B-\{1\}$ and $L\ge 1$.

Recall that $g^2\in [\pi_1(M),\pi_1(M)]$, because $gg^c=1$ in $\pi_1(M)$ implies $2[g]=0$ under the abelianization.
Then \cite[Theorem 3.1]{Ch2018} implies 
\[
\mathrm{scl}(g^2)\ge \frac{1}{2}-\frac{1}{N},
\]
where $N\ (\ge 2)$ is the minimal order of $a_i$ and $b_i$.

Since, by definition, 
$\mathrm{scl}(g)=\mathrm{scl}(g^2)/2$, 
and $\mathrm{scl}(g)=0$ (see \cite[Theorem~2.4]{IMT_PAMS2019}), we have $N=2$. 
If some $a_i \in \pi_1(M_1)$ is a torsion element of order two, 
then it is a desired torsion element. 
Assume that some $b_i \in \pi_1(M_2) * \cdots * \pi_1(M_n)$ is a torsion element of order two. 
Then apply \cite[Corollary~4.1.4]{MKS2004} repeatedly to find a torsion element of order two in $\pi_1(M_j)$ for some $j = 2, \dots, n$, 
which is the required torsion element. 

This completes a proof. 
\end{proof}

\begin{proof}[Proof of Theorem~\ref{g-torsion_order2}]
Let $M$ be an orientable,  irreducible
$3$--manifold with $G = \pi_1(M)$ torsion-free.
Assume that $G$ has a generalized torsion element $g$ of order two. 
Then by Lemma~\ref{Kbmap} we have a smooth $\pi_1$-injective map $f \colon F \to M$ from the Klein bottle $F$. 
Apply Propositions~\ref{torsion2} and \ref{verical_horizontal} to see that $f$ may be homotoped to a $\pi_1$-injective immersion 
$\varphi \colon F \to M$ 
so that $\varphi(F) \subset X$ for some Seifert fibered decomposing piece $X$ (with respect to the torus decomposition of $M$), 
for which  $\varphi \colon F \to X$ is either vertical or horizontal. 
($X$ may be $M$ itself if $M$ is Seifert fibered.)

Thus we have the following, which is the first assertion of Theorem~\ref{g-torsion_order2}. 
See Lemma \ref{Kbmap} for the choice of $a$ and $b$. 

\begin{claim}
\label{local}
Let $g$ be a generalized torsion element and $c$ a non-trivial element such that $g^c g = 1$. 
Then they are simultaneously conjugate to $\varphi(b)$ and $\varphi(a)$ in $\pi_1(X)$, respectively. 
In particular, %$g$ is conjugate to a generalized torsion element $\varphi(b)$ of $\pi_1(X)$.
$(\varphi(b), \varphi(a))$ is a generalized torsion pair of $\pi_1(X)$.
\end{claim}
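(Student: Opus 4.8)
The plan is to assemble Claim~\ref{local} directly from the three ingredients already in hand, namely Lemma~\ref{Kbmap}, Lemma~\ref{torsion2}, and Proposition~\ref{verical_horizontal}, together with the observation at the start of the proof of Theorem~\ref{g-torsion_order2} that $f$ has been homotoped to the vertical-or-horizontal immersion $\varphi\colon F\to X$. First I would fix the notation carefully: by Lemma~\ref{Kbmap} we have $\pi_1(F)=\langle a,b\mid a^{-1}ba=b^{-1}\rangle$ and the original $\pi_1$-injective map $f$ satisfies $f(b)=g$ and $f(a)=c$, where $(g,c)$ is the given generalized torsion pair in $G$. The crucial point is that homotopy of maps induces the \emph{same} homomorphism on fundamental groups up to an inner automorphism coming from the basepoint track.

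The key step is to track what happens to $f_*$ under the homotopy $f\simeq\varphi$. Since $F$ and $X$ are path-connected and the homotopy need not be basepoint-preserving, the induced maps $f_*$ and $\varphi_*$ on $\pi_1$ differ by conjugation by the basepoint-tracking element $w\in\pi_1(X)$ determined by the path traced by the image of the basepoint during the homotopy. Concretely, for every $\gamma\in\pi_1(F)$ one has $\varphi_*(\gamma)=w^{-1}f_*(\gamma)w$ inside $\pi_1(X)\subset G$ (after including $\pi_1(X)$ into $G$). Applying this to $\gamma=b$ and $\gamma=a$ gives $\varphi(b)=w^{-1}gw=g^{w}$ and $\varphi(a)=w^{-1}cw=c^{w}$, which is exactly the assertion that $g$ and $c$ are \emph{simultaneously} conjugated (by the single element $w$) into $\varphi(b)$ and $\varphi(a)$.

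Finally I would verify that $(\varphi(b),\varphi(a))$ is genuinely a generalized torsion pair of $\pi_1(X)$. Both $\varphi(b)$ and $\varphi(a)$ lie in $\pi_1(X)$ by construction, since $\varphi(F)\subset X$ and $\varphi_*$ lands in the subgroup $\pi_1(X)$. The defining relation $a^{-1}ba=b^{-1}$ of $\pi_1(F)$ pushes forward under the homomorphism $\varphi_*$ to $\varphi(a)^{-1}\varphi(b)\varphi(a)=\varphi(b)^{-1}$, i.e. $\varphi(b)^{\varphi(a)}\varphi(b)=1$, which is precisely the equation defining a generalized torsion pair. Alternatively, this follows from the general observation (recorded in the displayed computation preceding Lemma~\ref{property_g-pair}) that conjugating a generalized torsion pair $(g,c)$ by any $x\in G$ yields the generalized torsion pair $(g^{x},c^{x})$; taking $x=w$ produces $(\varphi(b),\varphi(a))$ immediately.

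The main obstacle I anticipate is the basepoint bookkeeping: one must be careful that the homotopies provided by Lemma~\ref{torsion2} and Proposition~\ref{verical_horizontal} are free homotopies, so that the identification $f_*=$ (conjugate of $\varphi_*$) is valid only after accounting for the tracking element $w$, and that $\pi_1(X)$ is being regarded as a subgroup of $G=\pi_1(M)$ via a fixed path to the ambient basepoint. Making the word ``simultaneously'' precise — that a single $w$ conjugates both $g$ to $\varphi(b)$ and $c$ to $\varphi(a)$ — is the whole content of the claim, and it hinges on using one and the same basepoint track for both generators, which is automatic because $a$ and $b$ share the basepoint $q_0$ of $F$.
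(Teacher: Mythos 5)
Your proposal is correct and follows essentially the same route as the paper: the paper states the claim as an immediate consequence of chaining Lemma~\ref{Kbmap}, Lemma~\ref{torsion2}, and Proposition~\ref{verical_horizontal}, with the simultaneity of the conjugation coming from the single basepoint track of the free homotopy $f \simeq \varphi$ and the pair relation coming from pushing $a^{-1}bab=1$ through the homomorphism $\varphi_*$. Your write-up simply makes explicit the basepoint bookkeeping (and the inclusion $\pi_1(X)\subset G$) that the paper leaves implicit, which is exactly the right way to justify the word ``simultaneously.''
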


Now we characterize the Seifert fibered decomposing piece $X$.

\noindent
\textbf{Case 1}. $\varphi \colon F \to X$ is vertical. 
Lemma~\ref{vertical} asserts that $B_X$ has a cone points of even order, or $B_X$ is non-orientable.

\noindent
\textbf{Case 2}. $\varphi \colon F \to X$ is horizontal. 
Following Lemma~\ref{horizontal}, we see that $B_X$ is either $\mathbb{R}P^2$ with two cone points of order two, or the Klein bottle without cone points. 
In either case $B_X$ is non-orientable.

This completes a proof of Theorem~\ref{g-torsion_order2}. 
\end{proof}

\subsection{Classification of generalized torsion pairs}

In this subsection we will give a classification of generalized torsion pairs $(g, c)$, 
which proves Theorem~\ref{classification}.

Let $F$ be the Klein bottle with a fixed presentation
\[
\pi_1(F) = \langle a, b \mid \ a^{-1} b a b = 1 \rangle.
\]

For later convenience, we first classify generalized torsion pairs of the fundamental group of the Klein bottle $F$. 

\begin{lemma}
Any generalized torsion pair of $\pi_1(F)$ can be expressed as $(b^m,a^{2i+1}b^j)$  for some integers $m\ne 0$, $i$ and $j$. 
In particular, only non-zero powers of $b$ are generalized torsion elements of order two.
\end{lemma}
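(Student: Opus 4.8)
The plan is to reduce everything to the normal form in the Klein bottle group. First I would record that the relator $a^{-1}bab=1$ is equivalent to $a^{-1}ba=b^{-1}$, so conjugation by $a$ inverts $b$; iterating gives the rule $a^{-r}b^m a^r = b^{(-1)^r m}$, and combined with the fact that every element of $\pi_1(F)$ is uniquely of the form $a^i b^j$, this controls all products. I would also introduce the homomorphism $\phi\colon \pi_1(F)\to\mathbb{Z}$ defined by $a\mapsto 1$, $b\mapsto 0$, which is well defined since the relator maps to $0$; it records the exponent sum in $a$.

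The first real step is to pin down $g$. Since $(g,c)$ is a generalized torsion pair we have $g^c g=1$, i.e.\ $g^c=g^{-1}$. Applying $\phi$ and using that it is constant on conjugacy classes, I get $\phi(g)=\phi(g^{-1})=-\phi(g)$, hence $\phi(g)=0$. Writing $g=a^p b^q$, this says $p=0$, so $g=b^q$; non-triviality together with torsion-freeness of $\pi_1(F)$ forces $q\ne 0$, and I set $m=q$. This already establishes the ``in particular'' clause.

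The second step is to determine the admissible $c$. Writing $c=a^r b^s$, I compute $g^c=c^{-1}b^m c$: the factor $b^s$ commutes with $b^m$, and $a^{-r}b^m a^r=b^{(-1)^r m}$, so $g^c=b^{(-1)^r m}$. The equation $g^c=b^{-m}$ then reads $b^{(-1)^r m}=b^{-m}$; since $\pi_1(F)$ is torsion-free the powers of $b$ are pairwise distinct, so $(-1)^r m=-m$, and as $m\ne 0$ this forces $(-1)^r=-1$, i.e.\ $r$ is odd. Hence $c=a^{2i+1}b^j$ with $r=2i+1$ and $j=s$, and the very same computation shows conversely that every such pair satisfies $g^c g=1$.

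The computations are entirely routine once the normal form and the conjugation rule $a^{-r}b^m a^r=b^{(-1)^r m}$ are in hand; I do not anticipate a genuine obstacle. The only conceptual point is the use of $\phi$ (equivalently, the exponent sum in $a$) to force $g$ to be a power of $b$, after which the conjugacy equation collapses to a single relation among powers of $b$ that is trivially solved using torsion-freeness.
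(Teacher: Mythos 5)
Your proof is correct and takes essentially the same route as the paper's: both pin down $g$ as a power of $b$ by an abelianization-type argument (your $\phi$ is just the projection of $H_1(F)\cong\mathbb{Z}\oplus\mathbb{Z}_2$ onto its $\mathbb{Z}$--factor, where the paper uses $2[g]=0$), and both determine $c$ via the normal form $a^rb^s$ and the conjugation rule $a^{-r}b^ma^r=b^{(-1)^rm}$. The only difference is organizational: your single computation settles both implications at once, whereas the paper derives the ``if'' direction from its general lemmas (Lemma~\ref{property_g-pair} and Lemma~\ref{power}) together with a computation of the centralizer of $b$, and reserves the direct conjugation computation for the converse.
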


\begin{proof}
Let $g$ be a generalized torsion element of order two.
As in the proof of Lemma \ref{Kbmap},
$g$ is uniquely expressed as $g=a^ib^j$.

We have $g^cg=1$ for some $c$.
Hence $2[g]=0$ in $H_1(F)$.
Since $H_1(F)=\mathbb{Z}\oplus \mathbb{Z}_2$, where each factor is generated by $[a]$ and $[b]$, respectively,
$[g]=m[b]$ for some integer $m$.
Thus $i=0$ and $j=m$, so $g=b^m$.
Since $g\ne 1$, we have $m \ne 0$.

The relation $a^{-1} b a b = 1$ shows that $(b, a)$ is a generalized torsion pair.
By Lemma \ref{order2_power}, $(b^m,a)$ is also a generalized torsion pair. 
Furthermore, by Lemma~\ref{property_g-pair} $(b, c)$ (and hence $(b^m, c)$) is a generalized torsion pair as well 
if $a^{-1}c$ belongs to the centralizer of $b$. 

\begin{claim}
\label{centralizer}
The centralizer of $b$ consists of elements $a^{2i} b^j$ for $i, j\in \mathbb{Z}$. 
\end{claim}

\begin{proof}
Since $a^2 b = a (ab) = a (b^{-1} a) = (ab^{-1}) a  = (b a) a = b a^2$, $a^2$ belongs to the centralizer of $b$. 
Thus $a^{2i} b^j$ belongs to the centralizer of $b$. 
Assume for a contradiction that $a^{2i+ 1} b^j$ belongs to the centralizer of $b$. 
Then $b(a^{2i+ 1} b^j) = (a^{2i+ 1} b^j) b$, which implies that 
$ba (a^{2i} b^j) = a (a^{2i} b^j) b = ab (a^{2i} b^j)$. 
Hence we have $ab = ba$, a contradiction.  
\end{proof}

Then $a^{-1} c$ is $a^{2i}b^j$ for some integers $i$ and $j$. 
Thus $c = a^{2i+1}b^j$.  
Lemma \ref{power} claims that $(b^m, a^{2i+1}b^j)$ is a generalized torsion pair.

Conversely, assume that $(b^m,c)$ is a generalized torsion pair.  Note $m\ne 0$.
Then $c^{-1}b^mc=b^{-m}$.
Writing $c = a^pb^q$, 
we have $(b^{-q}a^{-p})b^m(a^pb^q)=b^{-m}$, 
which can be reduced to $a^{-p}b^ma^p=b^{-m}$.
Now we show that $p$ must be odd. 
Note that $(b^m)^{a^{p}} = a^{-p} b^m a^{p} = a^{-p+1}(a^{-1} b^m a) a^{p -1} = a^{-p+1} b^{-m} a^{p -1}$. 
Apply this repeatedly to see that if $p$ is odd,
then $(b^m)^{a^{p}} = b^{-m}$, 
and if $p$ is even, 
then $(b^m)^{a^{p}} = b^{m}$. 
Since $b^m \ne b^{-m}$, 
$a^{-p}b^ma^p = (b^m)^{a^{p}} = b^{-m}$ if and only if $p$ is odd. 
\end{proof}

By Lemmas \ref{Kbmap} and \ref{torsion2},
for a given (possibly, non-primitive) generalized torsion pair $(g,c)$ in $\pi_1(X)$,
there exists an immersion $\varphi\colon F\to X$ such that $\varphi(b)=g$ and $\varphi(a)=c$.
It suffices to examine only primitive generalized torsion pairs.

\begin{proof}[Proof of Theorem ~\ref{classification}]
Recall from Claim~\ref{local} that $g$ is conjugate to a generalized torsion element $\varphi(b)$ of $\pi_1(X)$. 
More precisely, up to conjugation we have
\[
\varphi(a)^{-1} \varphi(b) \varphi(a) \varphi(b) = 1\ \mathrm{in}\ \pi_1(X).
\]
Thus $(\varphi(b), \varphi(a))$ is a generalized torsion pair of $\pi_1(X)$. 

As in the proof of Theorem~\ref{g-torsion_order2} we will divide consideration into two cases depending upon $\varphi \colon F \to X$ is vertical or horizontal.

\noindent
\textbf{Case 1}. 
$\varphi \colon F \to X$ is vertical.

We follow the observation in the proof of Lemma~\ref{vertical}. 
Since $\varphi \colon F \to X$ is vertical, 
we have two induced foliations $\mathcal{F}_a$ and $\mathcal{F}_b$ of $F$.

\noindent
\textbf{Subcase 1-1}. 
The induced foliation of $F$ is $\mathcal{F}_a$. 

Then as in the proof of Lemma~\ref{vertical}, $\varphi(a) = t_0$ is an exceptional fiber of even index $2m$, 
$\varphi(a') = t_1$ is also an exceptional fiber of even index $2n$, 
$\varphi(b)$ is a horizontal closed curve satisfying one of the following: 
\begin{enumerate}
\renewcommand{\labelenumi}{(\roman{enumi})}
\item
$p_X(\varphi(b))$ is a non-trivial orientation preserving $x_0$--folded loop based at $x_0 = p_X(t_0)$ if $t_0 = t_1$, or 

\item
$p_X(\varphi(b))$ is an $x_1$--folded loop based at $x_0$ if $t_0 \ne t_1$. 
\end{enumerate}
Note that if $t_0 = t_1$, then $m = n$, 
but if $t_0 \ne t_1$, then $m$ may not be $n$. 
These correspond to generalized torsion pairs described in (i) or (ii) in Theorem~\ref{classification}, respectively.

\noindent
\textbf{Subcase 1-2}. The induced foliation of $F$ is $\mathcal{F}_b$. 

Let $b'$ be a leaf of $\mathcal{F}_b$ close to $b$, which is parallel to $b$. 
Both $\varphi(b)$ and $\varphi(b')$ are Seifert fibers of $X$, 
we see that these are regular fibers. 

On the other hand, $a$ is an orientation reversing curve in $F$, 
and $\varphi(a)$ should be projected to an orientation reversing curve on $B_X$, 
because $X$ is orientable. 
(Thus Subcase 1-2 happens only when $B_X$ is non-orientable. )

Thus $(g, c) = (\varphi(b), \varphi(a))$ is a generalized torsion pair. 

Note that any orientation reversing closed curve on $B_X$ which does not pass through cone points can be $\varphi(a)$. 
In fact, for any orientation reversing closed curve $\gamma$ on $B_X$ which does not pass through cone points, 
$p_X^{-1}(\gamma)$ is a $\pi_1$-injective immersed Klein bottle.
Then for a regular fiber $t$, we have $\gamma^{-1} t \gamma = t^{-1}$, 
equivalently $\gamma^{-1} t \gamma t = 1$, 
and thus we have that 
the regular fiber $t$ in $X$ represents a generalized torsion element of order two.

\noindent
\textbf{Case 2}. 
$\varphi \colon F \to X$ is horizontal.  

By Lemma~\ref{horizontal}, $B_X$ is either $\mathbb{R}P^2$ or the Klein bottle, 
and $(\varphi(b), \varphi(a))$ gives a generalized torsion pair. 

This completes a proof of Theorem~\ref{classification}.
\end{proof}

In Case 2, we do not know if every Seifert fiber space over $\mathbb{R}P^2$ with two exceptional fibers of indices two and 
Seifert fiber space over the Klein bottle with no exceptional fibers admit horizontal immersion from the Klein bottle. 
We give typical examples. 

\begin{example}
We give concrete examples of horizontal Klein bottles.
Let $A=\{(r,\theta) \mid 1\le r\le 2,\ 0\le \theta\le 2\pi\}$ be an annulus, and
let $f\colon A\to A$ be the homeomorphism given by
$f(r,\theta)=(3-r,2\pi-\theta)$.
For $A\times [0,1]$, we identify $A\times \{0\}$ and $A\times \{1\}$ via $f$.
That is, $(r,\theta,1)$ is identified with  $(3-r,2\pi-\theta,0)$.
Then the resulting manifold $M$ is the twisted $I$-bundle over the Klein bottle. 
See Figure~\ref{twisted_I_bundle_1}. 

In $A\times [0,1]$, there are two rectangles
$\{(r,0)\mid 1\le r\le 2\}\times [0,1]$ and $\{(r,\pi)\mid 1\le r\le 2\}\times [0,1]$.
After the identification by $f$, 
these yield two M\"{o}bius bands $B_1$ and $B_2$ in $M$.
Also, the annulus $A\times \{1/2\}\subset A\times [0,1]$ yields a non-separating annulus $C$ in $M$.

It is well known that there are two Seifert fibrations for $M$ \cite{Ja1980}.
One is a Seifert fibration $p_1 \colon M \to B_{X, 1}$, where $B_{X, 1}$ is the M\"obius band with no cone points. 
For this fibration, the annulus $C$ is vertical.
We call this $\mathcal{F}_1$; see Figure~\ref{twisted_I_bundle_1}. 

\begin{figure}[h]
\begin{center}
\includegraphics*[bb=0 0 483 296,width=0.43\textwidth]{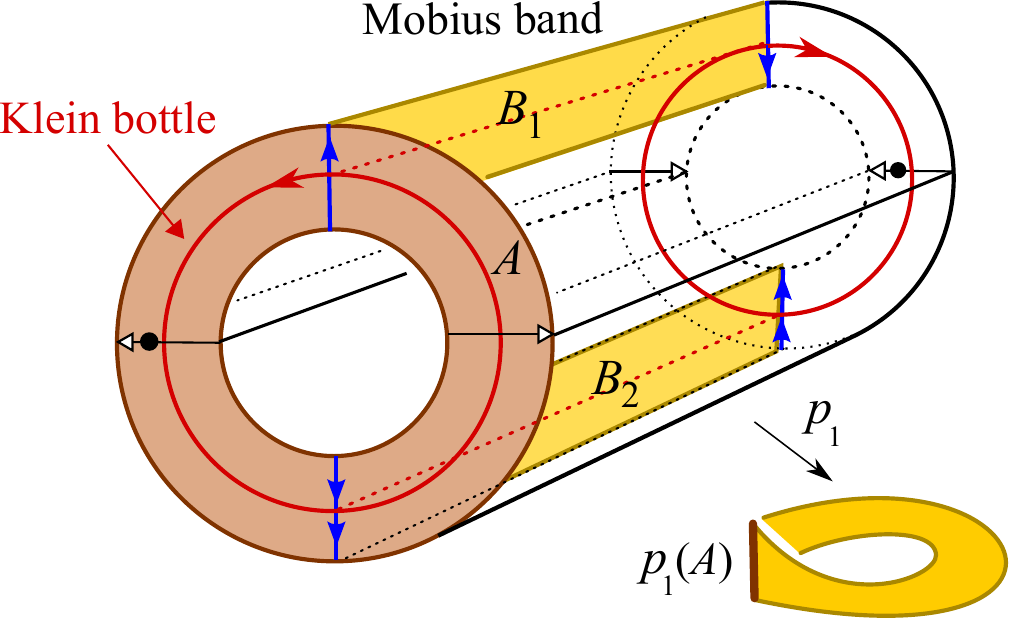}
\end{center}
\caption{Seifert fibration $\mathcal{F}_1$ of the twisted $I$--bundle over the Klein bottle over the M\"obius band.}
\label{twisted_I_bundle_1}
\end{figure}

The other, called $\mathcal{F}_2$, is described as follows.
$A\times [0,1]$ consists of arcs $\{x\}\times [0,1]$.
After the identification by $f$, these arcs yield fibers.
Two arcs $\{(3/2,0)\}\times [0,1]$ and $\{(3/2,\pi)\}\times [0,1]$
yield two exceptional fibers of index two. 
Then we have 
$p_2 \colon M \to B_{X, 2}$, 
where $B_{X, 2}$ is the disk with two cone points of order two; 
see Figure~\ref{twisted_I_bundle_2}.

\begin{figure}[h]
\begin{center}
\includegraphics*[bb=0 0 556 333,width=0.53\textwidth]{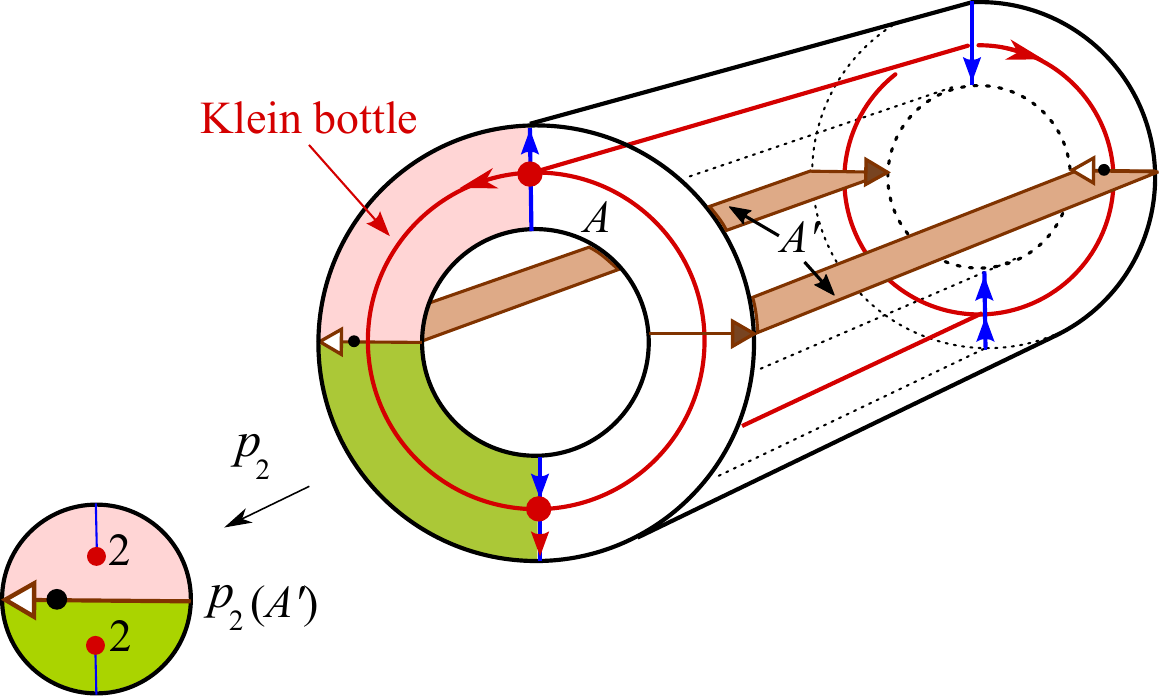}
\end{center}
\caption{Seifert fibration $\mathcal{F}_2$ of the twisted $I$--bundle over the Klein bottle over the the disk with two exceptional fibers of indices two.}
\label{twisted_I_bundle_2}
\end{figure}

First, we assign $\mathcal{F}_1$ to $M$.
From two copies of $M$, we identify those boundary tori via the identity.
The resulting manifold $X$ inherits a Seifert fibration whose base surface is the Klein bottle $B_1\cup B_1$ and
there are no exceptional fibers.
Then $B_1\cup B_1$ and $B_2\cup B_2$ give horizontal embedded Klein bottles in $X$.

Second,
we prepare two copies $M_1$ and $M_2$ of $M$, where $M_i$ is assigned $\mathcal{F}_i$ for $i=1,2$.
Identify $\partial M_1$ and $\partial M_2$ so that
the regular fibers match.
Then $M$ inherits a Seifert fibration whose base surface is the projective plane and
there are two exceptional fibers of index two.

In $M$,  the M\"{o}bius bands $B_1$ and $B_2$ in $M_1$ with the annulus $C\subset M_2$
yield a horizontal embedded Klein bottle.

\end{example}

In the case where $M$ is a hyperbolic $3$--manifold, 
the absence of generalized torsion element of order two may be shown directly using stable commutator length. 
For an independent interest, we give an alternate proof in this direction. 
For the definition of relative stable commutator length, see \cite{CH2024}. 
Recall also that a map $\phi\colon G \rightarrow \mathbb{R}$ is \emph{homogeneous quasimorphism\/} of defect $D(\phi)$ if
\[ D(\phi)=\sup_{g,h \in G}|\phi(gh)-\phi(g)-\phi(h)| < \infty, \quad \phi(g^{k})=k\phi(g) \ (\forall g\in G, k \in \mathbb{Z}).\]
It is known that the value of an element under a homogeneous quasimorphism is determined by its conjugacy class (\cite{Ca2009}).

Bavard's duality is quite useful to estimate the stable commutator length. 

\begin{theorem}[\textbf{Bavard's duality theorem} \cite{Bavard1991}]
\label{Bavard}
For $g \in [G, G]$,
\[ \mathrm{scl}_{G}(g)=\sup_{\phi} \frac{|\phi(g)|}{2D(\phi)} \]
where $\phi\colon G \rightarrow \mathbb{R}$ runs all homogeneous quasimorphisms of $G$ which are not homomorphisms. 
\end{theorem}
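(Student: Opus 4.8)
The statement is a duality, so the plan is to prove the two inequalities separately: the \emph{easy} bound $\sup_\phi \frac{|\phi(g)|}{2D(\phi)} \le \mathrm{scl}_G(g)$, which holds for every homogeneous quasimorphism that is not a homomorphism, and the \emph{hard} reverse bound, which requires exhibiting a nearly extremal quasimorphism.

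For the easy direction, the first step is to record that any homogeneous quasimorphism is conjugation-invariant, the fact already invoked in the excerpt. Combining this with homogeneity and the defect inequality, I would show $|\phi([a,b])| \le D(\phi)$ for a single commutator: writing $[a,b] = (aba^{-1})b^{-1}$, the defect inequality gives $|\phi([a,b]) - \phi(aba^{-1}) - \phi(b^{-1})| \le D(\phi)$, and then $\phi(aba^{-1}) = \phi(b)$ together with $\phi(b^{-1}) = -\phi(b)$ collapse the right-hand side. Telescoping the defect inequality across a product of $n$ commutators then yields $|\phi(\prod_{i=1}^n [a_i,b_i])| \le (2n-1)D(\phi)$. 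Applying this to $g^m$ with $n = \mathrm{cl}_G(g^m)$ and using $\phi(g^m) = m\phi(g)$ gives $m|\phi(g)| \le (2\,\mathrm{cl}_G(g^m)-1)D(\phi)$; dividing by $2m$ and letting $m \to \infty$ produces $\frac{|\phi(g)|}{2} \le \mathrm{scl}_G(g)\,D(\phi)$, and taking the supremum over $\phi$ finishes this half.

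The hard direction is where essentially all the content lies. Here I would pass to the functional-analytic picture underlying stable commutator length. Let $B_1^H(G;\mathbb{R})$ denote the space of real group $1$-boundaries in the bar complex, taken modulo the relations $g \sim hgh^{-1}$ and $g^n \sim ng$ that build in conjugation-invariance and homogenization; on this space $\mathrm{scl}$ extends to a filling (pseudo-)norm, and one checks that $\mathrm{scl}_G(g)$ agrees with this norm on the class of $g$. The crucial identification is that the dual of $(B_1^H(G;\mathbb{R}), \mathrm{scl})$ is precisely the space $Q(G)/H^1(G;\mathbb{R})$ of homogeneous quasimorphisms modulo homomorphisms, equipped with the defect norm (homomorphisms having defect zero and vanishing on $g \in [G,G]$). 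Granting this, Bavard duality becomes a Hahn--Banach statement: the norm of the class $[g]$ equals the supremum of $|\langle \phi, [g]\rangle|$ over dual elements of norm at most one, and a norming functional is realized by homogeneous quasimorphisms, yielding a $\phi$ with $\frac{|\phi(g)|}{2D(\phi)}$ arbitrarily close to $\mathrm{scl}_G(g)$.

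The main obstacle is exactly this duality identification: showing that $\mathrm{scl}$ is genuinely a norm (not merely a seminorm) on $B_1^H$ after quotienting its null space, and that every bounded linear functional on this normed space is represented by a homogeneous quasimorphism, so that the Hahn--Banach extremal functional upgrades to an honest extremal quasimorphism. This is the technical heart of Bavard's theorem, carried out via the coboundary map from $Q(G)$ into bounded cohomology; since the result is classical and used here only as a tool, I would cite \cite{Bavard1991} (see also \cite{Ca2009}) rather than reproduce this argument in full.
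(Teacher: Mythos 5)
The paper does not prove this theorem: it is quoted as a classical result attributed to \cite{Bavard1991} and then used as a black box, so there is no internal proof to compare yours against. Your proposal is correct and consistent with that treatment: the easy inequality $\sup_\phi |\phi(g)|/(2D(\phi)) \le \mathrm{scl}_G(g)$ is proved completely and correctly (the bound $|\phi([a,b])|\le D(\phi)$ via conjugation-invariance and homogeneity, the telescoped estimate $(2n-1)D(\phi)$, and the limit over $g^m$ are exactly the standard argument), while for the hard direction you give an accurate outline of the standard Bavard--Calegari duality (scl as a filling seminorm on homogenized boundaries, dual via Hahn--Banach to the defect norm on homogeneous quasimorphisms modulo homomorphisms) and, like the paper itself, ultimately defer its technical core to \cite{Bavard1991} and \cite{Ca2009}.
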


Let $M$ be a hyperbolic $3$--manifold.
Suppose that $\pi_1(M)$ contains a generalized torsion element $g$ of order two.
Since $g$ is conjugate to its inverse,
for any homogeneous quasimorphism $\phi\colon \pi_1(M) \to \mathbb{R}$, 
$\phi(g) = \phi(g^{-1}) = - \phi(g)$, i.e. $\phi(g) = 0$. 
Theorem~\ref{Bavard} implies that 
$\mathrm{scl}_{\pi_1(M)}(g)=0$. 
Then \cite[Lemma~2.7 (1)]{CH2024} shows that the relative stable commutator length satisfies
\[
\mathrm{scl}_{(\pi_1(M), \{\pi_1(T_i)\})}(g) \le \mathrm{scl}_{\pi_1(M)}(g) = 0.
\] 
Thus $\mathrm{scl}_{(\pi_1(M), \{\pi_1(T_i)\})}(g) = 0$. 

Lemma 8.13 of \cite{CH2024} claims that
$\pi_1(M)$ has a strong spectral gap relative to the peripheral subgroups, and that
$\mathrm{scl}_{(\pi_1(M), \{ \pi_1(T_i) \} )}(g)=0$ if and only if $g$ is conjugate into some peripheral subgroup $\pi_1(T_i)$. 
Hence if $M$ is closed, then we are done.

Assume that $\partial M \ne \varnothing$.
Then the condition $\mathrm{scl}_{(\pi_1(M), \{ \pi_1(T_i) \} )}(g)=0$ implies that $g$ is conjugate into a peripheral subgroup $\pi_1(T_i)$ for some $i$. 
However, \cite[Corollary 8.17]{CH2024} shows that $\mathrm{scl}_{\pi_1(M)}(g) > 0$, because $g\ne 1$. 
This is a contradiction.

%%%%%%%%%%%%%%%%%%%
\section{$R$--groups and $\overline{R}$--groups}

There is a handy characterization of an $\overline{R}$--group \cite{FW1999} using a ``Baumslag--Solitar equation'' rather than an isolator. 

\begin{lemma}[\cite{FW1999}]
\label{lem:Rbar}
A group $G$ is an $\overline{R}$--group if and only if
$G$ is an $R$--group and
a Baumslag--Solitar equation $x^{-1}y^mx=y^n$ for $y\ne 1$ implies
$m=n$.
\end{lemma}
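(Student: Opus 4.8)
The plan is to prove the biconditional in Lemma~\ref{lem:Rbar} by unwinding the definition of $\overline{R}$--group stated in the excerpt. Recall that $G$ is an $\overline{R}$--group if it is torsion-free and, for every $x \in G$, the centralizer and the normalizer of the isolator subset $I\langle x\rangle$ of the cyclic subgroup $\langle x\rangle$ coincide. Here $I\langle x\rangle$ consists of those elements some nonzero power of which lies in $\langle x\rangle$. Since both the $R$--group and $\overline{R}$--group conditions entail torsion-freeness, I would fix that assumption throughout and reduce the problem to comparing the isolator condition on the one hand with the conjunction ``$R$--group plus Baumslag--Solitar rigidity'' on the other.

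First I would establish the forward direction. Assume $G$ is an $\overline{R}$--group. To see $G$ is an $R$--group, suppose $x^n = y^n$ with $n \ne 0$; I would argue that $y$ normalizes $I\langle x\rangle$ (indeed $y^{-1} x^n y = y^{-1} y^n y = y^n = x^n \in \langle x\rangle$, so $y \in I\langle x\rangle$ and conjugation by $y$ fixes $x^n$, placing $y$ in the normalizer), hence by the $\overline{R}$ hypothesis $y$ centralizes $I\langle x\rangle$ and in particular commutes with $x$; commuting elements with equal $n$th powers in a torsion-free group must be equal, giving $x = y$. For the Baumslag--Solitar implication, suppose $x^{-1} y^m x = y^n$ with $y \ne 1$; conjugation by $x$ sends $y^m \in \langle y\rangle$ to $y^n \in \langle y\rangle$, so $x$ normalizes $I\langle y\rangle$, hence centralizes it, whence $x^{-1} y^m x = y^m$ and so $y^m = y^n$, forcing $m = n$ by torsion-freeness.

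Next I would prove the converse: assume $G$ is an $R$--group satisfying the Baumslag--Solitar rigidity $x^{-1} y^m x = y^n, y \ne 1 \Rightarrow m = n$, and show $G$ is an $\overline{R}$--group. Fix $x \in G$; the inclusion of the centralizer of $I\langle x\rangle$ in its normalizer is automatic, so the content is the reverse inclusion. Given $z$ in the normalizer of $I\langle x\rangle$, I would take an arbitrary $w \in I\langle x\rangle$ and show $z$ commutes with $w$. Since $z$ normalizes $I\langle x\rangle$, the element $z^{-1} w z$ again lies in $I\langle x\rangle$, so some powers $w^p$ and $(z^{-1}wz)^q = z^{-1} w^q z$ lie in $\langle x\rangle$; comparing these in the infinite cyclic group $\langle x\rangle$ yields a relation of the form $z^{-1} w^a z = w^b$ for suitable nonzero integers, and the Baumslag--Solitar hypothesis (applied with $y = w \ne 1$) forces $a = b$. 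Then $z^{-1} w^a z = w^a$ means $z$ and $w$ have commuting $a$th powers, and I would invoke the $R$--group property to descend from equality of conjugated powers to $z^{-1} w z = w$, i.e. $z$ centralizes $w$.

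The main obstacle I anticipate is the converse direction, specifically the bookkeeping of exponents when passing between $I\langle x\rangle$ and $\langle x\rangle$: one must carefully produce a genuine Baumslag--Solitar relation $z^{-1} w^a z = w^b$ with $a, b \ne 0$ from the raw data that various powers land in $\langle x\rangle$, and then cleanly apply the $R$--group property to conclude $z^{-1} w z = w$ rather than merely $z^{-1} w^a z = w^a$. The subtle point is that $R$--group rigidity must be used to cancel the common exponent $a$ in a torsion-free setting; I would make sure the extraction of the relation and the final cancellation are logically independent so that each hypothesis is used exactly where needed, and I would double-check that the edge case $w = 1$ is handled trivially so the hypothesis $y \ne 1$ is legitimately available.
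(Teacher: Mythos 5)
The paper offers no proof of this lemma at all: it is imported wholesale from Fay--Walls \cite{FW1999}. Your proposal therefore does genuinely more than the paper records, and the argument is essentially correct. The converse direction in particular is exactly right: from $w^p = x^s$ and $z^{-1}w^q z = x^t$ you cross-raise to get $z^{-1}w^{qs}z = x^{st} = w^{pt}$; the exponents $qs$ and $pt$ are nonzero because $w \neq 1$ and $G$ is torsion-free; the Baumslag--Solitar hypothesis then gives $qs = pt$; and the $R$--property converts $(z^{-1}wz)^{qs} = w^{qs}$ into $z^{-1}wz = w$. That is the heart of the matter, and the bookkeeping you flagged as the main obstacle does work out exactly as you anticipate.

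Two spots in your forward direction are stated too quickly, though both are easily repaired. First, ``conjugation by $y$ fixes $x^n$, placing $y$ in the normalizer'' of $I\langle x\rangle$ is not an immediate inference, since $I\langle x\rangle$ is only a subset and commuting with one of its elements says nothing a priori about preserving it; the missing (easy) step is that isolator membership is detected by powers: if $w^k = x^j$ with $k \neq 0$, then $(y^{-1}wy)^{kn} = y^{-1}x^{jn}y = x^{jn} \in \langle x\rangle$, and the same computation with $y^{-1}$ in place of $y$ shows conjugation by $y$ preserves $I\langle x\rangle$. (Note it is the relation $y^{-1}x^n y = x^n$, not the fact $y \in I\langle x\rangle$, that does the work here.) Second, the same remark applies to ``conjugation by $x$ sends $y^m$ to $y^n$, so $x$ normalizes $I\langle y\rangle$'': one must multiply exponents by $m$ (resp.\ $n$) to see $x^{-1}I\langle y\rangle x \subseteq I\langle y\rangle$ (resp.\ $x I\langle y\rangle x^{-1} \subseteq I\langle y\rangle$), and the degenerate cases $m = 0$ or $n = 0$ should be dismissed first using torsion-freeness. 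With these fill-ins your proof is complete and self-contained, which is more than the paper itself provides.
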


A Baumslag--Solitar equation has been extensively studied in  group theory and low-dimensional topology. 
Among them, we recall the following strong result due to Shalen \cite{Sha2001}. 

\begin{lemma}[\cite{Sha2001}]
\label{lem:sha}
Let $M$ be a connected orientable $3$--manifold.
Suppose that $x^{-1}y^mx=y^n$ holds for $x,y\in \pi_1(M)$ and non-zero integers $m$ and $n$.
Then either
\begin{itemize}
\item[(1)]
$y$ has finite order, or
\item[(2)]
$m=\pm n$.
\end{itemize}
\end{lemma}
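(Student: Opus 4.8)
The plan is to reduce, via the geometric decomposition of $3$--manifolds, to the three model geometries and in each case exploit a length function on which taking powers acts multiplicatively and which is a conjugacy invariant. Throughout I may assume $y$ has infinite order, since otherwise conclusion (1) holds, and I aim to derive $m=\pm n$.

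First the reductions. Passing to the subgroup $\langle x,y\rangle$, taking the corresponding cover, and applying Scott's compact core theorem, I may assume $\pi_1(M)$ is finitely generated and $M$ is compact. Using the prime decomposition \cite{Kne1929,Milnor1962}, I write $\pi_1(M)$ as a free product of the fundamental groups of the prime summands, the $S^2\times S^1$ summands contributing free $\mathbb{Z}$ factors in which the relation forces $m=n$ outright. In this free product an infinite-order $y$ is either conjugate to a cyclically reduced word of syllable length $\ge 2$ --- whose $k$-th power has syllable length $|k|$ times that of $y$, so that $y^m\sim y^n$ already yields $|m|=|n|$ by invariance of cyclically reduced length --- or $y$ is conjugate into a single factor, in which case \cite[Corollary~4.1.4]{MKS2004} and the conjugacy theorem for free products let me conjugate so that $x$, too, lies in that factor. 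A prime factor with finite $\pi_1$ would make $y$ a torsion element (conclusion (1)), and a prime factor with infinite $\pi_1$ distinct from $S^2\times S^1$ is aspherical and irreducible \cite{Hem1976}. Thus I have reduced to the case that $M$ is compact, orientable, irreducible, aspherical, with $x^{-1}y^mx=y^n$ in $\pi_1(M)$.

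Now I invoke Geometrization together with the torus (JSJ) decomposition. If $M$ is hyperbolic, I realize $\pi_1(M)$ as a discrete torsion-free subgroup of $\mathrm{PSL}_2(\mathbb{C})$, so $y$ is loxodromic or parabolic \cite{AFW2015}. For loxodromic $y$ the real translation length satisfies $\ell(y^k)=|k|\,\ell(y)$ and is conjugation-invariant, so $|m|\,\ell(y)=|n|\,\ell(y)$ gives $|m|=|n|$; for parabolic $y$ the relation forces $x$ to fix the fixed point of $y$ at infinity, hence to lie in the parabolic stabilizer, which is abelian by discreteness, giving $m=n$. If $M$ is Seifert fibered, I use $1\to\langle h\rangle\to\pi_1(M)\to\pi_1^{\mathrm{orb}}(B)\to 1$, where $h$ is the regular fiber and conjugation acts on $\langle h\rangle\cong\mathbb{Z}$ by $\pm1$. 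When the image $\bar y$ has finite order, a power of $y$ equals $h^s$ with $s\ne 0$, and the $\pm1$-action gives $h^{\pm sm}=h^{sn}$, i.e.\ $m=\pm n$; when $\bar y$ has infinite order, $B$ is Euclidean or hyperbolic and the analogous translation-length argument in $\mathrm{Isom}(\mathbb{E}^2)$ or $\mathrm{Isom}(\mathbb{H}^2)$ applies to $\bar y$.

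The remaining and hardest case is a non-trivial JSJ decomposition. I let $\pi_1(M)$ act on the Bass--Serre tree $T$ dual to the decomposition \cite{JS1979,J1979}, whose edge groups are copies of $\mathbb{Z}^2$. If $y$ acts hyperbolically on $T$, its simplicial translation length is again multiplicative under powers and conjugation-invariant, closing the argument. The genuine obstacle is when $y$ is elliptic: then $y$ is conjugate into a vertex (Seifert or hyperbolic piece) group, but the conjugator $x$ realizing $y^m\sim y^n$ may act hyperbolically on $T$ and shuffle the fixed sets $\mathrm{Fix}(y^m)$ and $\mathrm{Fix}(y^n)$ across the tree. My plan here is to exploit that the JSJ tori are canonical, so that the splitting is acylindrical in the sense that stabilizers of segments of length $\ge 2$ are abelian; this should force either $x$ to stabilize the fixed vertex of $y$, reducing the equation to a vertex group already handled above, or a power of $y$ to be conjugate into an edge $\mathbb{Z}^2$, where the relation among commuting elements once more yields $m=\pm n$. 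Controlling this elliptic sub-case is where I expect the main work to lie.
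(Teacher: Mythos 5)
First, for comparison: the paper does not prove this lemma at all --- it is imported verbatim from Shalen \cite{Sha2001}, whose original argument predates geometrization and runs through Scott's compact core theorem together with the characteristic submanifold theory of \cite{JS1979,J1979}. So you are attempting strictly more than the paper does, by a modern geometrization-based route. Most of your reductions are sound: the passage to the cover corresponding to $\langle x,y\rangle$ and its compact core; the free-product analysis (cyclically reduced syllable length is a conjugacy invariant, and the conjugacy theorem for free products lets you move $x$ into the factor containing $y$) --- though Grushko's decomposition of $\langle x,y\rangle$ is the cleaner tool here, since a prime factor with compressible boundary can still have freely decomposable fundamental group; the hyperbolic case via loxodromic translation length and abelianness of parabolic stabilizers; and the Seifert case via $1\to\langle h\rangle\to\pi_1(X)\to\pi_1^{\mathrm{orb}}(B)\to 1$, with conjugation acting on $\langle h\rangle$ by $\pm1$.

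The genuine gap is the one you flag yourself: the elliptic case of the action on the JSJ tree. This is not a technicality --- it is where the content of the theorem lives, since it contains all graph manifolds and the Sol torus bundles, whose group $\mathbb{Z}^2\rtimes_A\mathbb{Z}$ is an HNN extension over $\mathbb{Z}^2$ in which $y$ is elliptic while $x$ translates the Bass--Serre line. That example already shows your acylindricity mechanism cannot do what you want: every segment of that line has stabilizer the full $\mathbb{Z}^2$ (abelian, so consistent with your hypothesis), yet $x$ fixes no vertex, and the case survives only through the second horn of your dichotomy, a power of $y$ lying in an edge group $\mathbb{Z}^2$. But there your decisive step --- ``the relation among commuting elements once more yields $m=\pm n$'' --- is simply not a valid inference: in $\mathrm{BS}(1,2)\cong\mathbb{Z}[1/2]\rtimes\mathbb{Z}$ the element $y$ has infinite order, $y$ and all of its $x$--conjugates commute, and yet $x^{-1}yx=y^{2}$. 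What rescues the $3$--manifold situation is unimodularity, which your outline never uses: conjugation by $x$ carries JSJ edge groups to edge groups by isomorphisms of $\mathbb{Z}^2$, so when $y^{k}=v\in\mathbb{Z}^2$ the relation becomes $B(mv)=nv$ for a matrix $B\in\mathrm{GL}_2(\mathbb{Z})$ obtained by composing gluing isomorphisms along the segment from the fixed edge to its $x$--translate; a rational eigenvalue $n/m$ of $B$ is a rational root of a monic integral quadratic with constant term $\pm1$, hence $n/m=\pm1$. Even granting this, you must still treat the intermediate configuration in which the relevant intersection of edge groups drops to the cyclic fiber subgroup of a Seifert vertex (there conjugation acts by $\pm1$, so the conclusion holds, but it has to be argued), and you must actually prove that the elliptic case lands in one of these situations rather than asserting that the canonicity of the JSJ tori ``should force'' it. As written, the final third of your argument is a plan with its key lemma missing, so the proposal is an outline rather than a proof; Shalen's paper \cite{Sha2001} closes exactly this case, and without geometrization.
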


\begin{lemma}
\label{lem:R-gpair}
An $R$--group does not contain a generalized torsion element of order two.
In particular, it does not have a subgroup isomorphic to the fundamental group
of the Klein bottle.
\end{lemma}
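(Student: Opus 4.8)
The plan is to produce, from the generalized torsion relation, two \emph{distinct} elements of the group whose squares coincide, and then to invoke the defining property of an $R$--group to force the torsion element to be trivial. Suppose for contradiction that an $R$--group $G$ contains a generalized torsion element $g$ of order two. As explained in the discussion of generalized torsion pairs in Section~\ref{sec:intro}, this provides an element $c$ with $g^cg=1$, equivalently $c^{-1}gc=g^{-1}$; note that $g\ne 1$ by definition, and recall that an $R$--group is torsion--free.

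The key step is to rewrite this conjugation relation as two ``swap'' identities. Multiplying $c^{-1}gc=g^{-1}$ on the left by $c$ gives $gc=cg^{-1}$. Inverting both sides of $c^{-1}gc=g^{-1}$ yields $c^{-1}g^{-1}c=g$, and multiplying this on the left by $c$ gives $cg=g^{-1}c$. With these identities I would compute the square of $gc$:
\[
(gc)^2=g(cg)c=g(g^{-1}c)c=c^2 .
\]
Thus $gc$ and $c$ are elements of $G$ with equal squares.

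Now I would apply the $R$--group hypothesis with exponent $n=2$: the equality $(gc)^2=c^2$ forces $gc=c$, hence $g=1$, contradicting the non-triviality of the generalized torsion element $g$. This establishes that an $R$--group has no generalized torsion element of order two. For the ``in particular'' clause, I would recall that the fundamental group of the Klein bottle, $\langle a,b\mid a^{-1}bab=1\rangle$, contains the generalized torsion element $b$ of order two (the relation gives $b^ab=1$). If $G$ were an $R$--group possessing a subgroup isomorphic to this group, then the image of $b$ would satisfy the same relation and stay non-trivial in $G$, hence be a generalized torsion element of order two in $G$, contradicting what was just proved.

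The argument is short, so the only real obstacle is locating the right element to feed into the $R$--group condition. Applying the condition directly to $g$ and $g^{-1}$ is unproductive: the putative relation $g^n=(g^{-1})^n$ amounts to $g^{2n}=1$, which is unavailable in a torsion--free setting and in any case is not an instance of the $R$--group hypothesis. The effective move is instead to manufacture a genuine equality of squares between the distinct elements $gc$ and $c$, which the swap identities $gc=cg^{-1}$ and $cg=g^{-1}c$ deliver through $(gc)^2=c^2$.
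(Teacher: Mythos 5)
Your argument is correct. From $c^{-1}gc=g^{-1}$ you get $gc=cg^{-1}$, hence $(gc)^2=(cg^{-1})(gc)=c^2$, and the $R$--group property applied with $n=2$ forces $gc=c$, i.e. $g=1$, contradicting the non-triviality of a generalized torsion element; the degenerate possibility $c=1$ is handled by the same computation, since then $g^2=1^2$ gives $g=1$ directly. The ``in particular'' clause is also handled correctly: $b$ is non-trivial in $\langle a,b\mid a^{-1}bab=1\rangle$ and is conjugated to its inverse by $a$, so its image in any overgroup containing this group as a subgroup is a generalized torsion element of order exactly two.

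One point of comparison: the paper does not actually prove this lemma inline --- its entire proof is the citation ``This is \cite[Theorem 7]{HMT2023}.'' Your proposal therefore supplies a complete, self-contained argument for a statement the paper outsources, and it is the natural short computation one would expect to underlie that citation (an alternative classical route, producing $[g,c^2]=1$ and invoking the fact that in $R$--groups centralizers of powers coincide with centralizers, is longer and needs an extra property of $R$--groups, whereas your equality of squares uses only the definition).
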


\begin{proof}
This is \cite[Theorem 7]{HMT2023}.
\end{proof}

\begin{proof}[Proof of Theorem \ref{RRbar}]
First, any $\overline{R}$--group is an $R$--group \cite{FW1999}.

Conversely, assume that $G$ is an $R$--group.
Suppose that $G$ is not an $\overline{R}$--group for a contradiction.
By Lemma \ref{lem:Rbar}, 
there exist $x$ and $y\ne 1$ satisfying $x^{-1}y^mx=y^n$ with $m\ne n$.
Since $G$ is torsion-free, Lemma \ref{lem:sha} implies that $m=-n\ (\ne 0)$.
Then $(x^{-1}yx)^m=x^{-1}y^mx=y^{-m}=(y^{-1})^m$.
Since $G$ is an $R$--group, $x^{-1}yx=y^{-1}$.
Thus $y$ is a generalized torsion element of order two, which is impossible by Lemma \ref{lem:R-gpair}.
\end{proof}

Finally, we give a proof of Theorem \ref{R-group_characterization}.
In the proof, we use an easy fact on bi-ordering.
Let us recall that a group $G$ is said to be \textit{bi-orderable\/}
if $G$ admits a strict total ordering which is invariant under left and right multiplications.
We note that a bi-orderable group is torsion-free (see \cite{CR2012}).

\begin{lemma}\label{lem:BO-R}
If $G$ is bi-orderable, then $G$ is an $R$--group.
\end{lemma}

\begin{proof}
Let $x$ and $y$ are distinct elements; without loss of generality we may assume $x < y$. 
We first observe that $x^n \ne y^n$ for any $n > 0$. 
For any $i > 0$, $x^i < y^i$ implies $x^{i+1} = x x^i < y x^i < y y^i = y^{i+1}$. 
Thus $x^n < y^n$ for any $n > 0$. 
Suppose that $x^{-n} = y^{-n}$ for some $n > 0$. 
This shows that $x^n = y^n$, a contradiction. 
\end{proof}

\begin{proof}[Proof of Theorem \ref{R-group_characterization}]

We begin by showing the ``only if'' part. 
Assume that $G$ is an $R$--group. 
Suppose for a contradiction that $M$ has a Seifert fibered decomposing piece $X$ with an exceptional fiber or 
the non-orientable base surface $B_X$. 

Suppose first that $X$ has a non-orientable base surface $B_X$. 
Then $X$ has a vertical Klein bottle, 
which yields a generalized torsion element of order two. 
By Lemma~\ref{lem:R-gpair}, $\pi_1(X)$ is not an $R$--group. 
Hence $G$ is not an $R$--group neither. 

Now assume that $B_X$ is an orientable surface of genus $g$ with $k$ boundary components, where $g \ge 0$ and $k \ge 0$. 
Let $t_1, \dots, t_n$ be exceptional fibers of indices $p_1, \dots, p_n$, respectively. 
By the assumption, we have $n \ge 1$. 
Then $\pi_1(X)$ has a presentation:
\begin{equation}
\begin{split}
\langle a_1, b_1, \dots, a_g, b_g, & c_1, \dots, c_n,  d_1, \dots, d_k, h   \mid   [a_i, h]=[b_i, h] = [c_j, h] = [d_s, h] = 1,\\
& c_i^{p_i} = h^{\alpha_i},\ [a_1, b_1]\cdots[a_g, b_g]c_1\dots c_n d_1\cdots d_k = h^b \rangle,  
\end{split}
\end{equation}
where $(p_i,\alpha_i)$ is the Seifert invariant of the $i$-th exceptional fiber $t_i$, and $b$ is an integer.
%Assume that $g = 0$. 
If $g=0$, $n \le 2$ and $k = 0$, 
then $X$ is a lens space, and so it is spherical or $S^2 \times S^1$, a contradiction. 
If $g=0$, $n = 1$ and $k  = 1$, 
then it is easy to see that $X$ is a solid torus, contradicting the assumption. 
Thus we have a non-trivial element $y \in \pi_1(X)$ which does not commute with $c_1$. 

It should be noted that a regular fiber represents a central element $h$ of $\pi_1(X)$. 
So the equation $x^{p_1} = c_1^{p_1}$ has solution 
$x = c_1$, $y^{-1} c_1 y$ for the above $y \in \pi_1(X)$. 
In fact, 
\[
(y^{-1} c_1 y)^{p_1} = y^{-1} c_1^{p_1} y = y^{-1} h^{\alpha_1} y = h^{\alpha_1}  = c_1^{p_1}.
\]
Since $[c_1, y] \ne 1$, 
$c_1$ and $y^{-1}c_1y$ give distinct ($p_1$-th) roots of $c_1^{p_1}$,
and hence $\pi_1(X)$ is not an $R$--group, a contradiction again.

Let us prove the ``if'' part. 
Assume that $M$ does not have a Seifert fibered decomposing piece $X$ which has either (i) non-orientable base surface or 
(ii) an exceptional fiber.
We will show that $G$ is an $R$--group. 

Suppose for a contradiction that $G$ is not an $R$--group. 
This means that there exist distinct elements $x$ and $y$ such that $x^n=y^n$ holds for some $n\ne 0$.
Let $z=x^n$.  Then $x$ and $y$ are $n$-th roots of $z$.
We show that $z$ has non-trivial root structure (see \cite{JS1979}).
Suppose that $x$ and $y$ lie in the same cyclic group $\langle t\rangle$ for a contradiction.
Then $x=t^i$ and $y=t^j$ for some integers $i$ and $j$.
Since $x^n=y^n$, we have $t^{in}=t^{jn}$, so $t^{(i-j)n}=1$.
Then $i=j$, because $G$ is torsion-free and $n\ne 0$.
Thus $x=y$, a contradiction.
Hence, $z$ has non-trivial root structure. 

Assume first that $M$ is a Haken $3$--manifold. 
In this case we will apply \cite[Theorem VI.3.1]{JS1979}. 
However, for our purpose we need to confirm that 
$M$ does not contain the embedded Klein bottle; 
see \cite[Addendum to Theorem VI.3.1]{JS1979}.

So before apply \cite[Theorem VI.3.1]{JS1979}, 
we claim that $M$ does not contain the embedded Klein bottle.
Suppose that $M$ contains the Klein bottle $F$.
Then the regular neighborhood $N(F)$ of $F$ gives the twisted $I$--bundle over $F$.
Note that $N(F)$ admits two Seifert fibrations: one over the disk with two exceptional fibers of indices two, or
one over the M\"{o}bius band without exceptional fiber.
Let us write $T=\partial N(F)$.
If $T$ is incompressible in $M$, 
then $M=N(F)$ or $M$ admits a Seifert fibered decomposing piece with an exceptional fiber or
a non-orientable base surface.
This contradicts the assumption on $M$.
Hence $T$ is compressible.
Since $M$ is irreducible, $M$ is the union of $N(F)$ and a solid torus.
Then $M$ is any one of  $S^2\times S^1$,  $\mathbb{R}P^3\#\mathbb{R}P^3$, or a prism manifold.
All of these are impossible from the assumption.
Hence $M$ does not contain the Klein bottle.

Now we may 
apply \cite[Theorem VI.3.1]{JS1979} to see that $M$ has a Seifert fibered decomposing piece $X$ which has an exceptional fiber.  
This contradicts the initial assumption. 

Let us assume that $M$ is a non-Haken $3$--manifold. 
Then $M$ is closed \cite{Hem1976,Ja1980}, 
and since $M$ is irreducible and $G$ is infinite, the Geometrization implies that $M$ is either a Seifert fiber space or a hyperbolic $3$--manifold of finite volume.  
In the case where $M$ is a Seifert fiber space, 
the assumption that its base surface is orientable and it has no exceptional fiber implies
that  $M$ is a circle bundle over an orientable surface $B_M$. 

If $B_M \ne S^2$, then by \cite[Theorem 1.5]{BRW2005}, 
$G$ is bi-orderable. 
Then Lemma~\ref{lem:BO-R} shows that $G$ is an $R$--group, contradicting the assumption. 
Thus we see that $B_M = S^2$.
However, a circle bundle over $S^2$ is either $S^3$, $S^2\times S^1$ or a lens space.
These are all impossible.

Assume that $M$ is a closed hyperbolic $3$--manifold.
It is known that the centralizer of any non-trivial element is cyclic and hence $G$ has the unique root property (see \cite{BB2010}),
and it is an $R$--group, contradicting the assumption. 
\end{proof}

\noindent
\textbf{Acknowledgements.}
We would like to thank Joel Hass for a private communication which enables us to apply his result effectively in this work. 
We also thank the referee for careful reading and useful suggestions.

 The first named auther has been supported by JST, the establishment of university fellowships towards the creation of science technology innovation, Grant Number JPMJFS2129.
The second named author has been partially supported by JSPS KAKENHI Grant Number 19K03502 and Joint Research Grant of Institute of Natural Sciences at Nihon University for 2022.
The third named author has been partially supported by JSPS KAKENHI Grant Number JP20K03587.

%% The Appendices part is started with the command \appendix;
%% appendix sections are then done as normal sections
%% \appendix

%% \section{}
%% \label{}

%% If you have bibdatabase file and want bibtex to generate the
%% bibitems, please use
%%
%%  \bibliographystyle{elsarticle-num} 
%%  \bibliography{<your bibdatabase>}

\begin{thebibliography}{99}

%% \bibitem{label}
%% Text of bibliographic item
\bibitem{AFW2015}
M. Aschenbrenner, S. Friedl and H. Wilton,
\textit{$3$--manifold groups},
 EMS Series of Lectures in Mathematics. European Mathematical Society (EMS), Z\"{u}rich, 2015. 


\bibitem{BB2010}
L. Bartholdi and O. Bogopolski,
\textit{On abstract commensurators of groups},
J. Group Theory \textbf{13} (2010), no. 6, 903--922. 

\bibitem{Bavard1991}
C. Bavard; 
Longeur stable des commutateurs,  
Enseign.\ Math.\ \textbf{37} (1991), 109--150. 


\bibitem{BRW2005} 
S. Boyer, D. Rolfsen and B. Wiest,
\textit{Orderable $3$--manifold groups}, 
Ann.\ Inst.\ Fourier \textbf{55} (2005), 243--288. 

\bibitem{Ca2009}
D. Calegari, 
\textit{scl},
MSJ Mem., 20
(Mathematical Society of Japan, Tokyo, 2009).

\bibitem{Ch2018}
L.  Chen,
\textit{Spectral gap of scl in free products},
Proc. Amer. Math. Soc. \textbf{146} (2018), no. 7, 3143--3151. 

\bibitem{CH2024}
L. Chen and N. Heuer,
\textit{Spectral gap of scl in graphs of groups and $3$--manifolds},
preprint. \texttt{arXiv:1910.14146.}


\bibitem{CR2012}
A. Clay and D. Rolfsen, 
\textit{Ordered groups, eigenvalues, knots, surgery and L--spaces},
Math. Proc. Cambridge Philos. Soc. \textbf{152} (2012), no. 1, 115--129. 


\bibitem{DD2024}
A. Das and D. Das,
\textit{Reversibility in the Seifert-fibered spaces},
preprint. \texttt{arXiv:2403.01541.}

\bibitem{FW1999}
T. Fay and G. Walls,
\textit{$\overline{R}$--groups},
J. Algebra \textbf{212} (1999), 375--393. 


\bibitem{Hass1984}
J. Hass, 
\textit{Minimal surfaces in Seifert fiber spaces}, 
Topology Appl.\ \textbf{18} (1984), 145--151. 

\bibitem{HH1985}
J. Hass and J. Hughes, 
\textit{Immersions of surfaces in $3$--manifolds}, 
Topology\ \textbf{24} (1985), 97--112. 

\bibitem{Hem1976}
J. Hempel,
\textit{$3$--manifolds}, 
Ann.\ of Math.\ Studies, No.\ 86 (Princeton University Press, Princeton, NJ, 1976).

\bibitem{HMT2023}
K. Himeno, K. Motegi and M. Teragaito,
\textit{Generalized torsion, unique root property and Baumslag--Solitar relation for knot groups}, 
Hiroshima Math. J. \textbf{53} (2023), 345--358.


\bibitem{IMT_PAMS2019}
T. Ito, K. Motegi and M.Teragaito,
\textit{Generalized torsion and decomposition of $3$--manifolds},
Proc. Amer. Math. Soc. \textbf{147} (2019), no. 11, 4999--5008. 

\bibitem{IMT_BLMS2023}
T. Ito, K. Motegi and M.Teragaito,
\textit{Generalized torsion for hyperbolic $3$--manifold groups with arbitrary large rank},
Bull.\ London Math.\ Soc.,\  \textbf{55} (2023), 1203--1209.


\bibitem{Ja1980}
W. Jaco,
\textit{Lectures on three-manifold topology},
CBMS Regional Conference Series in Mathematics, 43 
(Amer.\ Math.\ Soc., Providence, R.I., 1980).


\bibitem{JS1979}
W. Jaco and P. B. Shalen,
\textit{Seifert fibered spaces in $3$--manifolds},
Mem. Amer. Math. Soc. \textbf{21} (1979), no. 220.

\bibitem{J1979}
K. Johannson,
\textit{Homotopy equivalences of $3$--manifolds with boundaries},
Lecture Notes in Mathematics, 761. Springer, Berlin, 1979.


\bibitem{Kne1929}
H. Kneser,
\textit{Geschlossene Fl\"achen in dreidimensionalen Mannigfaltigkeiten}, 
Jber.\ Deutsch.\ Math.\ Verein.\ \textbf{38} (1929), 248--260. 


\bibitem{MKS2004}
W. Magnus, A. Karrass and D. Solitar; 
\textit{Combinatorial group theory: presentations of groups in terms of generators and relations}, 
Dover books on mathematics, Courier Corporation, 2004. 


\bibitem{Milnor1962}
J. Milnor,
\textit{A unique decomposition theorem for $3$--manifolds},  
Amer.\ J.\ Math.\ \textbf{84} (1962), 1--7.

\bibitem{Price1977}
T. Price, 
\textit{Homeomorphisms of quaternion space and projective plane in four space}, 
J.\ Austral.\ Math.\ Soc.\ \textbf{23} (1977), 112--128. 

\bibitem{Rubin1979}
J. H. Rubinstein, 
\textit{On $3$--manifolds that have finite fundamental group and contain Klein bottles}, 
Trans.\ Amer.\ Math.\ Soc.\ \textbf{251} (1979), 129--137. 

\bibitem{Sc1983}
P. Scott,
\textit{The geometries of $3$--manifolds},
Bull. London Math. Soc. \textbf{15} (1983), no. 5, 401--487.


\bibitem{Sha2001}
P. Shalen, 
\textit{Three-manifolds and Baumslag--Solitar groups},
Topology Appl. \textbf{110} (2001), no. 1, 113--118. 



\end{thebibliography}

%% else use the following coding to input the bibitems directly in the
%% TeX file.

\end{document}